\documentclass[11pt,a4paper]{article}

\usepackage{amsmath}
\usepackage{amsthm}
\usepackage{amsfonts}
\usepackage{color}
\usepackage{setspace}
\usepackage{mathrsfs}
\usepackage{graphicx}

\usepackage{calc}
%% The tikz package is used for doing the actual drawing.
\usepackage{tikz}

\newtheorem{theorem}{Theorem}
\newtheorem{conjecture}{Conjecture}
\newtheorem{lemma}{Lemma}
\newtheorem{proposition}{Proposition}

\newtheorem{case}{Case}
\newtheorem{observation}{Observation}
\newtheorem{subcase}{Subcase}[case]
\newtheorem{claim}{Claim}
\newtheorem{corollary}{Corollary}

\usetikzlibrary{decorations.markings}
%%
%% The next line says how the "vertex" style of nodes should look: drawn as small circles.
\tikzstyle{vertex}=[circle, draw, inner sep=2pt, minimum size=6pt]

\tikzstyle{filledvertex}=[circle, draw, fill, inner sep=2pt, minimum size=6pt]
%%
%% Next, we make a \vertex command as a shorthand in place of \node[vertex} to get that style.
\newcommand{\vertex}{\node[vertex]}

\tikzstyle{directed}=[postaction={decorate,
	decoration={markings,mark=at position 0.5 with {\arrow{stealth}}}
	%other arrow styles:>, <, |, stealth, latex, stealth reversed
}]

\usepackage{geometry}
\geometry{left=3.0cm,right=3.0cm,top=3.0cm,bottom=3.0cm}

\begin{document}
\begin{spacing}{1.2}

\title{Kernels by properly colored paths in arc-colored digraphs
\thanks{The first author is supported by NSFC (Nos. 11601430 and 11671320), NPU (No. 2016KY0101) and China Postdoctoral Science Foundation (No. 2016M590969);
the second author is supported by  JSPS KAKENHI (No. 15K04979);
and the third author is supported by NSFC (Nos. 11571135 and 11671320).
Part of this work was done while the first and the third author were visiting Yokohama City University and the hospitality was appreciated.}}

\author{\quad Yandong Bai $^{a,}$\thanks{Corresponding author. E-mail addresses: bai@nwpu.edu.cn (Y. Bai), shinya.fujita.ph.d@gmail.com (S. Fujita), sgzhang@nwpu.edu.cn (S. Zhang).},
\quad Shinya Fujita $^{b}$,
\quad Shenggui Zhang $^{a}$\\[2mm]
\small $^{a}$ Department of Applied Mathematics, Northwestern Polytechnical University, \\
\small Xi'an 710129, China\\
\small $^{b}$ International College of Arts and Sciences, Yokohama City University, \\
\small Yokohama 236-0027, Japan}

\date{\today}
\maketitle

\begin{abstract}
A {\em kernel by properly colored paths} of an arc-colored digraph $D$
is a set $S$ of vertices of $D$ such that
(i) no two vertices of $S$ are connected by a properly colored directed path in $D$,
and (ii) every vertex outside $S$ can reach $S$ by a properly colored directed path in $D$.
In this paper,
we conjecture that every arc-colored digraph with all cycles properly colored has such a kernel
and verify the conjecture for unicyclic digraphs, semi-complete digraphs and bipartite tournaments, respectively. 
Moreover, weaker conditions for the latter two classes of digraphs are given.

\medskip
\noindent
{\bf Keywords:}
kernel; kernel by monochromatic (properly colored, rainbow) paths
\smallskip
\end{abstract}

\section{Introduction}
All graphs (digraphs) considered in this paper are finite and simple,
i.e., without loops or multiple edges (arcs).
For terminology and notation not defined here,
we refer the reader to Bang-Jensen and Gutin \cite{BG2008}.

A path (cycle) in a digraph always means a {\em directed} path (cycle)
and a {\em $k$-cycle $C_{k}$} means a cycle of length $k$, where $k\geq 2$ is an integer.
For a digraph $D$,
define its {\em kernel} to be a set $S$ of vertices of $D$ such that
(i) no two vertices of $S$ are connected by an arc in $D$,
and (ii) every vertex outside $S$ can reach $S$ by an arc in $D$.
This notion was originally introduced by von Neumann and Morgenster \cite{VM1944} in 1944.
Since it has many applications in both cooperative games and logic (see \cite{Berge1977,Berge1984}),
its existence has been the focus of extensive study,
both from the algorithmic perspective and the sufficient condition perspective.
Among them, the following results are of special importance.
For more results on kernels,
we refer the reader to the survey paper \cite{BG2006} by Boros and Gurvich.

\begin{theorem}[Chv\'{a}tal \cite{Chvatal1973}]\label{thm: finding a kernel is npc}
It is NP-complete to recognize whether a digraph has a kernel or not.
\end{theorem}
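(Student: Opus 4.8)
The plan is to treat the two halves of NP-completeness in turn. Membership in NP is routine: a kernel $S$ is itself a certificate of polynomial size, and conditions (i) and (ii) can be checked directly --- for (i), scan all ordered pairs of vertices of $S$ and confirm that no arc joins them; for (ii), scan every vertex $v\notin S$ and confirm that $v$ has an out-neighbour in $S$. This runs in time polynomial in $|V(D)|+|A(D)|$.

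The real work is NP-hardness, which I would obtain by a polynomial-time reduction from $3$-SAT. Given a $3$-CNF formula $\phi$ with variables $x_1,\dots,x_n$ and clauses $C_1,\dots,C_m$, build a digraph $D_\phi$ as follows. For each variable $x_i$ create two vertices, again written $x_i$ and $\overline{x_i}$, joined by a $2$-cycle (a digon). For each clause $C_j=(\ell_{j,1}\vee\ell_{j,2}\vee\ell_{j,3})$ create three vertices $p_j,q_j,r_j$ forming a directed $3$-cycle $p_j\to q_j\to r_j\to p_j$, and add an arc from $p_j$ to the literal-vertex of each $\ell_{j,k}$ (so $q_j$ and $r_j$ carry no arcs beyond the triangle). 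The design exploits two simple facts: a digon can contain at most one vertex of a kernel; and a directed $3$-cycle has no kernel, while the triangle $p_jq_jr_j$ above \emph{can} be completed to a kernel --- by taking $r_j$ into $S$ --- exactly when $p_j$ is absorbed from outside, i.e.\ when the literal-vertex of some $\ell_{j,k}$ lies in $S$.

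For correctness I would prove both directions. \emph{If $\phi$ is satisfiable}, let $S$ consist of the literal-vertices set true together with $r_1,\dots,r_m$; a short check shows $S$ is independent ($q_j,r_j$ meet no arcs outside their own triangle, and true literal-vertices meet no arcs among themselves) and absorbing ($p_j$ is absorbed by a true literal of $C_j$, $q_j$ by $r_j$, and the unchosen endpoint of each digon by the chosen one). \emph{Conversely}, given a kernel $S$ of $D_\phi$: analysing the triangle shows that, within $\{p_j,q_j,r_j\}$, the only possibility consistent with (i) and (ii) is $S\cap\{p_j,q_j,r_j\}=\{r_j\}$, and that this in turn forces some literal-vertex of $C_j$ to lie in $S$ (otherwise $p_j$ would be unabsorbed); in particular no $p_j$ or $q_j$ lies in $S$. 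Then for each digon exactly one of $x_i,\overline{x_i}$ lies in $S$ --- at most one by (i), at least one since otherwise $x_i$ would have no out-neighbour in $S$ --- so setting ``$x_i$ true iff $x_i\in S$'' defines a truth assignment, and the fact that each $C_j$ has a literal-vertex in $S$ makes it satisfying. Since $D_\phi$ has $O(n+m)$ vertices and arcs and is computable in polynomial time, this is a valid reduction and the theorem follows.

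The step I expect to be the main obstacle is pinning down the clause gadget so that it behaves correctly in all regimes: the triangle must be completable by \emph{any} single satisfied literal, this must stay consistent with independence even when two or three literals of the clause are true at once, and --- crucially --- the gadget must \emph{fail} to admit a kernel when the clause is unsatisfied. Ensuring that the literal-vertices have no out-arcs back into the gadgets (so that the vertex we place in $S$ never sits at the tail of an arc whose head is also in $S$) and carefully running the small case analysis on the $3$-cycle is where the argument really lives; the rest is bookkeeping.
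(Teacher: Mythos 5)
Your proposal is correct. Note that the paper does not prove this statement at all: it is quoted as Theorem~1 with a citation to Chv\'{a}tal's 1973 report, whose original argument is likewise a polynomial reduction from satisfiability with variable digons and clause gadgets, so your route is essentially the classical one rather than a new decomposition. Your two halves both check out under the paper's convention that a vertex outside $S$ must have an \emph{out}-neighbour in $S$: membership in NP is immediate from the kernel itself as certificate, and in the reduction the case analysis on the triangle $p_j\to q_j\to r_j\to p_j$ is sound (pairwise adjacency forces at most one triangle vertex into $S$; emptiness or choosing $p_j$ strands $q_j$, choosing $q_j$ strands $r_j$, so $S$ meets the triangle exactly in $r_j$ and $p_j$ must then be absorbed by a literal-vertex), while the digons force exactly one of $x_i,\overline{x_i}$ into $S$, yielding a well-defined satisfying assignment; conversely a satisfying assignment gives the independent absorbing set you describe, since literal-vertices have no out-arcs into the clause gadgets. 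The only point worth making explicit in a final write-up is the one you already flag implicitly: digons are legitimate in the paper's setting (simple digraphs allow opposite arcs, cf.\ the notion of symmetrical arcs), so the variable gadget is admissible.
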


\begin{theorem}[Richardson \cite{Richardson1953}, von Neumann and Morgenster \cite{VM1944}]
\label{thm: kernels in digraphs}
Let $D$ be a digraph.
Then the following statements hold:\\
$(i)$ if $D$ has no cycle, then $D$ has a unique kernel;\\
$(ii)$ if $D$ has no odd cycle, then $D$ has at least one kernel;\\
$(iii)$ if $D$ has no even cycle, then $D$ has at most one kernel.
\end{theorem}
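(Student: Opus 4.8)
The plan is to prove (ii) and (iii) first, since (i) then follows immediately: an acyclic digraph has no odd cycle, hence at least one kernel by (ii), and no even cycle, hence at most one by (iii), so exactly one. Part (i) also has a quick direct proof by induction on $|V(D)|$: an acyclic digraph has a sink $v$, which must lie in every kernel; deleting $v$ together with its in-neighbours leaves an acyclic digraph, and one checks that adjoining $v$ to its (unique, by induction) kernel gives the unique kernel of $D$. I would record this but not dwell on it.

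For (ii), I would induct on $|V(D)|$ and first dispose of the case that $D$ is \emph{not} strongly connected. Choose a terminal strong component $C$ (no arc leaves $C$; such $C$ exists because the condensation is a nonempty acyclic digraph). Since $|V(C)|<|V(D)|$ and $C$ has no odd cycle, induction gives a kernel $S_C$ of $C$, necessarily nonempty. Let $R$ consist of the vertices of $D$ outside $S_C$ that have an arc into $S_C$; as $S_C$ is a kernel of $C$ we have $V(C)\subseteq S_C\cup R$. Applying induction to the strictly smaller $D'=D-(S_C\cup R)$ (still odd-cycle-free) gives a kernel $S'$, and I claim $S_C\cup S'$ is a kernel of $D$. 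For independence: no arc lies inside $S_C$ or inside $S'$; an arc from $S_C$ to $S'$ would leave $C$, and an arc from $S'$ to $S_C$ would put its tail in $R$ — both impossible. For domination: a vertex outside $S_C\cup S'$ lies in $R$ (hence dominates $S_C$) or in $V(D')\setminus S'$ (hence dominates $S'$).

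It remains to prove (ii) when $D$ is strongly connected; this is the heart of the argument. If $|V(D)|=1$ the single vertex is a kernel, so assume $|V(D)|\ge 2$. Fix $r\in V(D)$ and, for each $v$, let $p(v)\in\{0,1\}$ be the parity of the length of a directed walk from $r$ to $v$ (one exists by strong connectivity). This is well defined: two such walks of different parities, each concatenated with a fixed walk from $v$ back to $r$, would produce two closed walks whose lengths differ in parity, so one of them is odd — but an odd closed walk always contains an odd cycle, contradicting the hypothesis. Every arc $u\to v$ then satisfies $p(u)\ne p(v)$, so $V_0=\{v:p(v)=0\}$ is independent; and every $v$ with $p(v)=1$ has an out-neighbour (strong connectivity with $|V(D)|\ge 2$), which must lie in $V_0$. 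Hence $V_0$ is a kernel. The main obstacle here is spotting this structural fact — that forbidding odd cycles forces the strong digraph to ``bipartition'' according to walk-length parity — and being careful with the base cases, and with the fact that the terminal-component reduction genuinely decreases $|V(D)|$, which it does since $S_C\ne\emptyset$.

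For (iii), suppose $D$ had two distinct kernels $S\ne T$; by symmetry pick $x_0\in S\setminus T$. Build an infinite walk: given $x_{2i}\in S\setminus T$, since $x_{2i}\notin T$ it has an out-neighbour $x_{2i+1}\in T$, and $x_{2i+1}\notin S$ by independence of $S$, so $x_{2i+1}\in T\setminus S$; symmetrically $x_{2i+1}$ yields $x_{2i+2}\in S\setminus T$. Finiteness forces $x_i=x_j$ for some $i<j$, and since the walk alternates between $S\setminus T$ and $T\setminus S$ we get $i\equiv j\pmod 2$, so $x_i\to x_{i+1}\to\cdots\to x_j$ is a cycle of even length — a contradiction. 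Therefore $D$ has at most one kernel; this step is routine once the alternating walk is set up.
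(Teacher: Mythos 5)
The paper does not actually prove this theorem: it is quoted as a classical result of Richardson and von Neumann--Morgenstern, so there is no in-paper argument to compare against. Your proof is correct and essentially the standard one, and it is self-contained enough to stand on its own. For (ii), the reduction to a terminal strong component together with the parity argument (walk-length parity from a fixed root is well defined in a strongly connected digraph with no odd cycle, every arc switches parity, and the even-parity class is a kernel) is sound; note that you invoke without proof the fact that a closed directed walk of odd length contains an odd cycle --- this is standard, but deserves a line (split the closed walk at a repeated intermediate vertex into two shorter closed walks whose lengths sum to an odd number, and induct). For (iii), one small imprecision: a repetition $x_i=x_j$ only gives a closed walk, and an even closed walk need not contain an even cycle (two odd cycles sharing a vertex give an even closed walk), so you should take $j$ minimal among repetitions, which makes $x_i,x_{i+1},\ldots,x_{j-1}$ pairwise distinct and hence the segment a genuine cycle; its length is even because even-indexed vertices lie in $S\setminus T$ and odd-indexed ones in the disjoint set $T\setminus S$. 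With these one-line patches the argument is complete, and deducing (i) from (ii) and (iii) (or by your sink-deletion induction) is fine.
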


An arc $uv\in A(D)$ is called {\em symmetrical} if $vu\in A(D)$.
For a cycle $(u_{0},u_{1},\ldots,u_{k-1},u_{0})$,
we call two arcs $u_{i}u_{i+2}$ and $u_{i+1}u_{i+3}$ {\em crossing consecutive}, where addition is modulo $k$.
The following theorem has been proved.

\begin{theorem}[Duchet \cite{Duchet1980}, Duchet and Meyniel \cite{DM1983}, Galeana-S\'{a}nchez and Neumann-Lara\cite{GN1984}]
\label{thm: kernels under some constraits for odd cycles}
A digraph $D$ has a kernel if one of the following conditions holds:\\
$(i)$ each cycle has a symmetrical arc;\\
$(ii)$ each odd cycle has two crossing consecutive arcs;\\
$(iii)$ each odd cycle has two chords whose heads are adjacent vertices.
\end{theorem}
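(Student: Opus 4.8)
The common engine for all three parts is the notion of a \emph{semikernel}: an independent set $S\subseteq V(D)$ such that every vertex $z\notin S$ having an in-arc from $S$ also has an out-arc to $S$. I would rely on the classical reduction of Neumann-Lara: \emph{if every nonempty induced subdigraph of $D$ has a nonempty semikernel, then $D$ has a kernel.} Its proof is a short induction on $|V(D)|$: pick a nonempty semikernel $S$ of $D$; the defining property gives $N^{+}(S)\setminus S\subseteq N^{-}(S)$, so $D':=D-(S\cup N^{-}(S))$ is a \emph{proper} induced subdigraph; by induction $D'$ has a kernel $S'$, and $S\cup S'$ is then a kernel of $D$ (there are no arcs between $S$ and $V(D')$, so $S\cup S'$ is independent; and any vertex outside $S\cup S'$ lies either in $N^{-}(S)$, hence is absorbed by $S$, or in $V(D')\setminus S'$, hence is absorbed by $S'$). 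Since each of the hypotheses (i)--(iii) is inherited by induced subdigraphs, it therefore suffices to prove: a digraph satisfying (i) (resp.\ (ii), (iii)) has a nonempty semikernel.

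For (i) this is immediate. If $D$ has a vertex $v$ all of whose out-arcs are symmetrical (in particular, a sink), then $\{v\}$ is a semikernel. Otherwise every vertex has a non-symmetrical out-arc, and following such arcs produces a walk $v_{0}\to v_{1}\to v_{2}\to\cdots$ with every $v_{i}v_{i+1}$ non-symmetrical; finiteness forces a repeated vertex, yielding a cycle with no symmetrical arc and contradicting (i).

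For (ii) and (iii) I would argue by contradiction through a minimal counterexample: let $D$ satisfy the hypothesis, have no kernel, and have $|V(D)|$ as small as possible, so that every proper induced subdigraph of $D$ has a kernel. Two standard structural facts come first. (a) $D$ is strongly connected: otherwise take a terminal strong component $T$ (so $T\subsetneq V(D)$), a kernel $K$ of $T$ (which exists by minimality, $T$ being proper) and — noting that $T\subseteq K\cup N^{-}(K)$ since $K$ is a kernel of $T$ — a kernel $K'$ of the proper subdigraph $D-(K\cup N^{-}(K))$; then $K\cup K'$ is a kernel of $D$, a contradiction. (b) For every $v\in V(D)$ and every kernel $S_{v}$ of $D-v$, the vertex $v$ has an in-neighbour in $S_{v}$ but no out-neighbour in $S_{v}$: if $v$ had an out-neighbour in $S_{v}$ then $S_{v}$ would already be a kernel of $D$, and if $v$ had no neighbour in $S_{v}$ at all then $S_{v}\cup\{v\}$ would be. Analysing how the kernels of the various $D-v$ overlap — repeatedly passing from a vertex $v$ to $D-v$ and following an in-neighbour of $v$ inside its kernel — one traces out a closed walk in $D$ and extracts from it a cycle $C$; since $D$ has no kernel it contains an odd cycle by Theorem~\ref{thm: kernels in digraphs}(ii), and a parity argument shows that $C$ can be taken odd. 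Finally one determines which chords of $C$ are forced, and which are forbidden, by the ``no out-neighbour in $S_{v}$'' constraints of (b); this shows that $C$ can possess neither two crossing consecutive arcs nor two chords with adjacent heads, contradicting (ii) (resp.\ (iii)).

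The genuinely hard part is this last step: turning the statement ``every kernel of $D-v$ leaves $v$ unabsorbed'' into exact information about the chords of the traced odd cycle, and hence into a violation of the crossing-consecutive (resp.\ adjacent-heads) condition. This is precisely the combinatorial core of Duchet, Duchet--Meyniel and Galeana-S\'{a}nchez--Neumann-Lara, and I would carry it out by induction on the length of $C$, removing one chord at a time and invoking minimality on a suitably smaller digraph.
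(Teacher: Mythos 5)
A point of reference first: the paper does not prove this statement at all --- it is quoted as a known theorem of Duchet, Duchet--Meyniel and Galeana-S\'{a}nchez--Neumann-Lara and used as a black box (only part (i) is ever invoked, in the proof of Theorem \ref{thm: pcp-kernels in bipartite tournaments}(i)). So your attempt has to be judged on its own merits rather than against an in-paper argument.

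Your semikernel reduction is correct and complete: the induction showing that ``every nonempty induced subdigraph has a nonempty semikernel'' implies the existence of a kernel is sound (independence and absorption of $S\cup S'$ both check out), the hypotheses (i)--(iii) are indeed inherited by induced subdigraphs, and your argument for (i) --- either some vertex has only symmetrical out-arcs, giving a one-vertex semikernel, or following non-symmetrical out-arcs closes a cycle with no symmetrical arc --- is a genuine, self-contained proof of that part. The gap is in (ii) and (iii): everything after your structural facts (a) and (b) is a plan, not a proof. The decisive steps --- ``one traces out a closed walk and extracts from it a cycle $C$,'' ``a parity argument shows that $C$ can be taken odd,'' and above all ``one determines which chords of $C$ are forced, and which are forbidden, by the constraints of (b)'' --- are precisely the combinatorial core of the cited theorems, and you acknowledge leaving it out. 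In particular, it is not explained how the fact that no kernel of $D-v$ absorbs $v$ controls the chords of an odd cycle that need not pass through $v$, nor how the proposed ``induction on the length of $C$, removing one chord at a time'' would be set up: what the smaller digraph is, why it still satisfies hypothesis (ii) or (iii), and why it still has no kernel. As written, parts (ii) and (iii) remain unproved; only part (i) --- incidentally the only part the paper uses --- is established.
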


It is worth noting that if we replace Condition (ii) in the definition of kernels by every vertex outside $S$ can reach $S$ by an arc or a path of length 2,
then such a vertex subset,
named {\em quasi-kernel},
always exists.
This was proved by Chv\'{a}tal and Lov\'{a}sz \cite{CL1974} in 1974.
Jacob and Meyniel \cite{JM1996} furthermore showed in 1996 that every digraph has either a kernel or three quasi-kernels.
For more results on quasi-kernels,
see \cite{BC2012,GPR1991,HH2008}.

Let $D$ be a digraph and $m$ a positive integer.
Call $D$ an {\em $m$-colored digraph} if its arcs are colored with at most $m$ colors.
Denote by $c(uv)$ the color assigned to the arc $uv$.
A subdigraph $H$ of an arc-colored digraph $D$ is called {\em monochromatic}
if all arcs of $H$ receive the same color,
and is called {\em rainbow}
if any two arcs of $H$ receive two distinct colors.
Define a {\em kernel by monochromatic paths}
(or an {\em MP-kernel} for short) of an arc-colored digraph $D$
to be a set $S$ of vertices of $D$ such that
(i) no two vertices of $S$ are connected by a monochromatic path in $D$,
and (ii) each vertex outside $S$ can reach $S$ by a monochromatic path in $D$.

The concept of MP-kernels in an arc-colored digraph was introduced by Sands, Sauer and Woodrow \cite{SSW1982} in 1982.
They showed that
every 2-colored digraph has an MP-kernel.
In particular, as a corollary,
they showed that every 2-colored tournament has a one-vertex MP-kernel.
Here note that each MP-kernel of an arc-colored tournament consists of one vertex.
They also proposed the problem that
whether a 3-colored tournament with no rainbow triangles has a one-vertex MP-kernel.
This problem still remains open
and has attracted many authors to investigate sufficient conditions for the existence of MP-kernels in arc-colored tournaments.
Shen \cite{Shen1988} showed in 1988 that for $m\geq 3$
every $m$-colored tournament with no rainbow triangles and no rainbow transitive triangles has a one-vertex MP-kernel,
and also showed that the condition ``with no rainbow triangles and no rainbow transitive triangles" cannot be improved for $m\geq 5$.
In 2004, Galeana-S\'{a}nchez and Rojas-Monroythe \cite{GR2004} showed,
by constructing a family of counterexamples,
that the condition of Shen cannot be improved for $m=4$, either.
Galeana-S\'{a}nchez \cite{Galeana1996} showed in 1996 that
every arc-colored tournament such that
the arcs, with at most one exception, of each cycle of length at most four are assigned the same color has a one-vertex MP-kernel.
Besides, Galeana-S\'{a}nchez and Rojas-Monroythe \cite{GR2004285} showed in 2004 that
every arc-colored bipartite tournament with all 4-cycles monochromatic has an MP-kernel.
For more results on MP-kernels,
we refer to the survey paper \cite{Galeana1998} by Galeana-S\'{a}nchez.

A subdigraph $H$ of an arc-colored digraph $D$ is called {\em properly colored}
if any two consecutive arcs of $H$ receive distinct colors.
Define a {\em kernel by properly colored paths}
(or a {\em PCP-kernel} for short) of an arc-colored digraph $D$
to be a set $S$ of vertices of $D$ such that
(i) no two vertices of $S$ are connected by a properly colored path in $D$,
and (ii) each vertex outside $S$ can reach $S$ by a properly colored path in $D$.

By the definitions of kernels, MP-kernels and PCP-kernels,
one can see in some sense
that both MP-kernels and PCP-kernels generalize the concept of kernels in digraphs.

\begin{observation}\label{observation: three kernels}
Let $D=(V(D),A(D))$ be a digraph.
Then the following three statements are equivalent.\\
$(i)$ $D$ has a kernel;\\
$(ii)$ $|A(D)|$-colored $D$ has an MP-kernel;\\
$(iii)$ $1$-colored $D$ has a PCP-kernel.
\end{observation}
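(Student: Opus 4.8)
The plan is to reduce both equivalences to one elementary observation: under each of the two special arc-colorings in the statement, the ``connecting paths'' that appear in the kernel conditions collapse to single arcs, so the notions of MP-kernel and PCP-kernel become literally the notion of a kernel on the very same vertex sets. No induction or case analysis is needed; everything follows by unfolding the definitions once the path structure has been pinned down.

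For $(i)\Leftrightarrow(ii)$ I would equip $D$ with the arc-coloring in which distinct arcs receive distinct colors; since $D$ has exactly $|A(D)|$ arcs, this is an $|A(D)|$-coloring. In this coloring a monochromatic path has all its arcs of one color, and as distinct arcs carry distinct colors such a path contains at most one arc. Hence for distinct vertices $u,v$ there is a monochromatic path from $u$ to $v$ if and only if $uv\in A(D)$. Substituting this into the definition of an MP-kernel, condition~(i) of an MP-kernel reads ``no two vertices of $S$ are joined by an arc'' and condition~(ii) reads ``every vertex outside $S$ reaches $S$ by an arc'', which are precisely the defining conditions of a kernel of $D$. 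Therefore $S$ is an MP-kernel of this colored $D$ exactly when $S$ is a kernel of $D$.

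For $(i)\Leftrightarrow(iii)$ I would take the $1$-coloring of $D$. A properly colored path must have consecutive arcs of distinct colors, but now every arc carries the single color, so again every properly colored path has at most one arc; exactly as before, a properly colored $u$--$v$ path exists (for $u\neq v$) iff $uv\in A(D)$. Consequently the two conditions defining a PCP-kernel of the $1$-colored $D$ become word for word the two conditions defining a kernel of $D$, which yields the last equivalence. There is essentially no hard step here: the only point requiring a little care is the bookkeeping for degenerate paths (those of length $0$ or $1$), and the remark that simplicity of $D$ — no loops or multiple arcs — is what makes the rainbow coloring in the first part well defined and the correspondence ``monochromatic $u$--$v$ path $\leftrightarrow$ arc $uv$'' exact. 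Once this is noted, both equivalences are immediate.
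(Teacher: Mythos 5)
Your proposal is correct and follows exactly the route the paper intends: the paper states this as an immediate consequence of the definitions (offering no separate proof), and your argument simply makes that explicit by noting that under the rainbow $|A(D)|$-coloring monochromatic paths, and under the $1$-coloring properly colored paths, are precisely the single arcs, so both kernel notions coincide with the ordinary kernel on the same vertex sets. Your remarks on degenerate paths and on simplicity of $D$ are exactly the right bookkeeping; nothing is missing.
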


In this paper we concentrate on providing some sufficient conditions for the existence PCP-kernels in arc-colored digraphs.
For convenience,
we write ``PC path" for ``properly colored path" in the following.
Define the {\em closure} $\mathscr{C}(D)$ of an arc-colored digraph $D$
to be a digraph with vertex set $V(\mathscr{C}(D))=V(D)$
and arc set $A(\mathscr{C}(D))=\{uv:$ there is a PC $(u,v)$-path in $D\}$.
It is not difficult to see that the following simple (but useful) result holds.

\begin{observation}\label{observation: kernels in D and its closure}
An arc-colored digraph $D$ has a PCP-kernel if and only if $\mathscr{C}(D)$ has a kernel.
\end{observation}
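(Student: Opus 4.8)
The plan is to verify that the collection of PCP-kernels of $D$ coincides \emph{exactly} with the collection of kernels of $\mathscr{C}(D)$; the stated equivalence then follows immediately. Since $V(\mathscr{C}(D))=V(D)$, it suffices to fix an arbitrary set $S\subseteq V(D)$ and show that $S$ satisfies condition (i) in the definition of a PCP-kernel of $D$ precisely when it satisfies condition (i) in the definition of a kernel of $\mathscr{C}(D)$, and likewise for condition (ii). The single fact doing all the work is the defining property of the closure: for distinct $u,v$, one has $uv\in A(\mathscr{C}(D))$ if and only if $D$ contains a PC $(u,v)$-path.

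For condition (i), I would argue as follows. Two distinct vertices $u,v\in S$ are joined by an arc of $\mathscr{C}(D)$ (that is, $uv\in A(\mathscr{C}(D))$ or $vu\in A(\mathscr{C}(D))$) if and only if $D$ has a PC $(u,v)$-path or a PC $(v,u)$-path, i.e.\ if and only if $u$ and $v$ are connected by a PC path in $D$. Hence $S$ is independent in $\mathscr{C}(D)$ exactly when no two vertices of $S$ are connected by a PC path in $D$. For condition (ii), a vertex $v\notin S$ has an arc into $S$ in $\mathscr{C}(D)$, say $vs\in A(\mathscr{C}(D))$ with $s\in S$, if and only if $D$ has a PC $(v,s)$-path; since $v\notin S$ and $s\in S$ force $v\neq s$, such a path is automatically of positive length, so it genuinely corresponds to an arc of $\mathscr{C}(D)$ and no degenerate case intervenes. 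Thus every vertex outside $S$ reaches $S$ by an arc of $\mathscr{C}(D)$ exactly when every vertex outside $S$ reaches $S$ by a PC path in $D$.

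Combining the two equivalences shows that $S$ is a PCP-kernel of $D$ if and only if $S$ is a kernel of $\mathscr{C}(D)$, and taking $S$ to range over all subsets yields the observation. I do not anticipate any real obstacle: this is a direct unpacking of definitions, and the only points deserving a word of care are that $\mathscr{C}(D)$ is loopless (so ``arc'' always means an arc between two distinct vertices, matching ``PC path between two distinct vertices''), and that the reachability clause concerns a vertex lying outside $S$ and hence a path that is necessarily nontrivial.
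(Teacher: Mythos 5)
Your proof is correct and matches the intended argument: the paper states this observation without proof (``it is not difficult to see''), and your direct unpacking of the definitions via the defining property of $\mathscr{C}(D)$, including the remarks on looplessness and nontriviality of the reaching path, is exactly the routine verification the authors had in mind.
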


\section{Main results}

We first consider the computational complexity of finding a PCP-kernel in an arc-colored digraph.

\begin{proposition}\label{proposition: finding a pcp-kernel is np-hard}
It is NP-hard to recognize whether an arc-colored digraph has a PCP-kernel or not.
\end{proposition}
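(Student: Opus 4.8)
The plan is to reduce from the problem of recognizing whether a digraph has a kernel, which is NP-complete by Theorem~\ref{thm: finding a kernel is npc}. Given an arbitrary digraph $D$, I want to construct in polynomial time an arc-colored digraph $D'$ such that $D'$ has a PCP-kernel if and only if $D$ has a kernel. The most natural attempt, in light of Observation~\ref{observation: kernels in D and its closure}, is to build $D'$ so that its closure $\mathscr{C}(D')$ is (isomorphic to) $D$; then $D'$ has a PCP-kernel iff $\mathscr{C}(D')=D$ has a kernel, and we are done.

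The key construction step is to realize each arc $uv$ of $D$ as a short properly colored path in $D'$ while introducing no unwanted PC paths. First I would subdivide every arc $uv\in A(D)$ once, inserting a new vertex $w_{uv}$ and replacing $uv$ by the two arcs $uw_{uv}$ and $w_{uv}v$, and color these two arcs with two distinct colors (say colors $1$ and $2$). Then in $D'$ there is a PC $(u,v)$-path for every original arc $uv$. I would then need to argue that these are the \emph{only} PC paths that matter: because each subdivision vertex $w_{uv}$ has in-degree $1$ and out-degree $1$ in $D'$, any path through $D'$ alternates between original vertices and subdivision vertices, and the coloring can be chosen (e.g.\ all ``first halves'' get color $1$, all ``second halves'' get color $2$) so that a path of length $4$ through $w_{uv}, \ldots, w_{xy}$ reuses color $2$ then color $1$ at the junction $v=x$ and is therefore \emph{not} properly colored. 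Hence the only PC paths in $D'$ joining two original vertices are the length-$2$ ones corresponding to arcs of $D$, and among subdivision vertices the closure behaves controllably. After checking that no subdivision vertex can belong to or obstruct a kernel in a way that changes the answer — one shows $\mathscr{C}(D')$ restricted to $V(D)$ equals $D$, and each $w_{uv}$ is dominated in $\mathscr{C}(D')$ by $u$ and dominates $v$, so kernels of $D'$ correspond exactly to kernels of $D$ — the equivalence follows from Observation~\ref{observation: kernels in D and its closure}.

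The main obstacle I anticipate is precisely this bookkeeping about ``no extra PC paths'': I must make sure the $2$-coloring is globally consistent so that concatenating two subdivided arcs at a shared original vertex never produces a properly colored path, and I must handle the subdivision vertices carefully when translating kernels back and forth (a kernel of $\mathscr{C}(D')$ could in principle try to include some $w_{uv}$). Choosing the alternating $1/2$ coloring uniformly kills the first issue, and a short case analysis — replacing any $w_{uv}$ in a candidate kernel by $v$, and noting independence is preserved since there is no PC path from $v$ back to $w_{uv}$ unless there were already a suitable structure in $D$ — should settle the second. The construction is clearly polynomial (it at most doubles the number of vertices and arcs and uses two colors), so NP-hardness follows; I would also remark that the instance produced uses only two colors, which strengthens the statement.
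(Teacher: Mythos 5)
There is a genuine gap, and it sits exactly at the step you flagged as the ``main obstacle.'' With the uniform coloring (all first halves of subdivided arcs get color $1$, all second halves get color $2$), the concatenation of two subdivided arcs at a shared original vertex $v$ \emph{is} properly colored: the path $(u,w_{uv},v,w_{vy},y)$ has arc colors $1,2,1,2$, and a properly colored path only forbids equal colors on \emph{consecutive} arcs, not reuse of colors along the path. At the junction $v$ the incoming arc has color $2$ and the outgoing arc has color $1$, which are distinct, so nothing is blocked. In fact your coloring of $D'$ is a proper arc coloring in the sense of Proposition~\ref{proposition: pcp-kernels in digraphs}(ii), so \emph{every} digraph $D'$ produced by your reduction has a PCP-kernel, regardless of whether $D$ has a kernel; equivalently, $\mathscr{C}(D')$ restricted to $V(D)$ is the reachability (transitive) closure of $D$, not $D$ itself. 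The reduction therefore maps all instances to yes-instances and establishes nothing. To block the concatenation you would need the color of the arc entering $v$ (second half of $uv$) to \emph{equal} the color of the arc leaving $v$ (first half of $vy$), which no uniform alternating choice can do, and even with a per-vertex coloring achieving this you would still face the delicate bookkeeping of kernels containing subdivision vertices.

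The paper's proof sidesteps all of this: it does not subdivide at all, but leaves $D$ monochromatic, so that PC paths between vertices of $D$ are exactly the arcs of $D$, and then adds a set $V^{*}$ of new source vertices dominating every vertex of $V(D)$, with those arcs carrying all $m$ colors. No vertex of $V^{*}$ can lie in a PCP-kernel (it dominates all of $V(D)$ and is reached by nothing), so PCP-kernels of $D'$ are precisely kernels of $D$, and NP-hardness follows from Theorem~\ref{thm: finding a kernel is npc}. Note also that Observation~\ref{observation: three kernels}(iii) already gives NP-hardness for $1$-colored digraphs outright; the point of the paper's construction is to produce genuinely $m$-colored hard instances, so your closing remark about a two-color strengthening would not add much even if the reduction were repaired.
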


\begin{proof}
Let $D$ be a digraph and $V^{*}$ a set of vertices with $V^{*}\cap V(D)=\emptyset$.
Let $D'$ be the digraph with $V(D')=V(D)\cup V^{*}$ and $A(D')=A(D)\cup \{uv:~u\in V^{*}, v\in V(D)\}$,
i.e., adding a set $V^{*}$ of new vertices to $D$ together with all possible arcs from $V^{*}$ to $V(D)$.
We can always choose a $V^{*}$ with $|\{uv:~u\in V^{*}, v\in V(D)\}|\geq m$.
Color $D'$ by using $m$ colors in such a way that the subdigraph $D$ is monochromatic
and the arc set $\{uv:~u\in V^{*}, v\in V(D)\}$ is $m$-colored.
It is not difficult to see that the $m$-colored $D'$ has a PCP-kernel if and only if $D$ has a kernel.
By Theorem \ref{thm: finding a kernel is npc} the computational complexity of the latter problem is NP-complete.
The desired result then follows directly.
\end{proof}

Now we present the following result.

\begin{proposition}\label{proposition: pcp-kernels in digraphs}
An arc-colored digraph $D$ has a PCP-kernel if one of the following conditions holds:\\
$(i)$ $D$ has no cycle;\\
$(ii)$ the coloring of $D$ is proper (consecutive arcs receive distinct colors);\\
$(iii)$ $D$ is properly-connected (each vertex can reach all other vertices by a PC path).
\end{proposition}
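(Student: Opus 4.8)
The plan is to prove each part by reducing to the existence of a kernel in the closure $\mathscr{C}(D)$ via Observation~\ref{observation: kernels in D and its closure}, and then invoking Theorem~\ref{thm: kernels in digraphs}. For part $(i)$, if $D$ has no cycle, I first want to show that $\mathscr{C}(D)$ also has no cycle. Indeed, a cycle $(u_0,u_1,\dots,u_{k-1},u_0)$ in $\mathscr{C}(D)$ would mean there is a PC $(u_i,u_{i+1})$-path in $D$ for each $i$; concatenating these gives a closed directed walk in $D$, which contains a cycle of $D$ unless all the walks are trivial. Since each PC path has length at least $1$, the walk is nontrivial, so $D$ contains a cycle, a contradiction. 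Hence $\mathscr{C}(D)$ is acyclic and has a kernel by Theorem~\ref{thm: kernels in digraphs}$(i)$; therefore $D$ has a PCP-kernel.

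For part $(iii)$, properly-connectedness says every vertex reaches every other by a PC path, so $\mathscr{C}(D)$ has an arc $uv$ for every ordered pair $u\neq v$; that is, $\mathscr{C}(D)$ is the complete digraph with all $2$-cycles. Any single vertex $\{v\}$ is then a kernel of $\mathscr{C}(D)$ (no arc inside a singleton, and every other vertex has an arc to $v$), so $D$ has a PCP-kernel. This case is essentially immediate.

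Part $(ii)$ is the substantive one. When the coloring of $D$ is proper, I want to argue that every directed path of $D$ is automatically a PC path, so that $\mathscr{C}(D)$ contains an arc $uv$ whenever there is \emph{any} directed $(u,v)$-path in $D$; in other words, $\mathscr{C}(D)$ is (a spanning subdigraph of, in fact equal to) the transitive closure of the reachability relation of $D$. Now I need to show this reachability-closure digraph has a kernel. The clean way is to observe it has no odd cycle, or more directly that it is the closure of a preorder: contract each strongly connected component of $D$ to a point, obtaining an acyclic "condensation" digraph whose own transitive closure is a partial order; a kernel of that condensation (which exists since it is acyclic, by Theorem~\ref{thm: kernels in digraphs}$(i)$) lifts, by choosing one vertex from each chosen component, to a kernel of $\mathscr{C}(D)$ — here I should check that the chosen set is independent (no two chosen vertices lie in components connected by a directed path, by the independence in the condensation and the fact that distinct chosen vertices lie in distinct components) and absorbing (every other vertex either lies in a chosen component, hence reaches the chosen representative within that strong component by a directed path, or lies in a component that reaches a chosen one). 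The main obstacle is exactly this last point: ensuring that a vertex in a \emph{chosen} strongly connected component can actually reach the designated representative — this needs that within a strong component every vertex reaches every other, which is true by strong connectivity, and that such a path stays PC, which is true because the whole coloring is proper. Once these routine verifications are in place, Observation~\ref{observation: kernels in D and its closure} finishes the proof.
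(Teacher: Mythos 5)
Your parts (i) and (iii) are correct and essentially identical to the paper's argument (closure acyclic plus Theorem \ref{thm: kernels in digraphs} (i); every vertex a PCP-kernel when $D$ is properly-connected). The gap is in part (ii), in the lifting step. You take a kernel of the condensation digraph and pick one representative per chosen component, justifying independence of the lifted set ``by the independence in the condensation''. But independence in the condensation only forbids \emph{arcs} between chosen components, not directed paths, whereas independence in $\mathscr{C}(D)$ requires that no chosen vertex reach another by \emph{any} directed path of $D$ (every path is PC here, since the coloring is proper). Concretely, let $D$ be the properly colored path $a\rightarrow b\rightarrow c$ with $c(ab)\neq c(bc)$: the condensation is $D$ itself, its unique kernel is $\{a,c\}$, yet $(a,b,c)$ is a PC $(a,c)$-path, so $\{a,c\}$ is not a PCP-kernel (the PCP-kernel is $\{c\}$). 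The parenthetical alternative ``observe it has no odd cycle'' also fails: if $D$ has a strong component on at least three vertices, $\mathscr{C}(D)$ restricted to it is a complete digraph with all $2$-cycles and contains triangles.

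The repair is small but necessary: apply Theorem \ref{thm: kernels in digraphs} (i) to the \emph{transitive closure} of the condensation (still acyclic), whose kernel is the set of sinks of the condensation; equivalently, choose one vertex from each terminal strong component of $D$. Independence in that transitive closure really does mean no directed path between chosen components, and your absorption check then goes through exactly as you sketched it (within a chosen strong component use strong connectivity, elsewhere use that every component reaches a terminal one). An alternative clean route is to note that every cycle of $\mathscr{C}(D)$ lies inside a strong component of $D$, so all of its arcs are symmetrical, and invoke Theorem \ref{thm: kernels under some constraits for odd cycles} (i). For comparison, the paper disposes of (ii) differently and very briefly: each vertex is a PCP-kernel when $D$ is strongly connected, and otherwise it takes ``the set of sinks''; your corrected construction (one vertex per terminal strong component) is in fact the precise form of that claim, since a non-strongly-connected $D$ need not have literal sinks.
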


\begin{proof}
Note that $\mathscr{C}(D)$ has a cycle if and only if $D$ has a cycle.
The statement (i) therefore follows directly from Theorem \ref{thm: kernels in digraphs} (i) and Observation \ref{observation: kernels in D and its closure}.
Assume that the coloring of $D$ is proper.
If $D$ is strongly connected,
then each vertex forms a PCP-kernel.
If $D$ is not strongly connected,
then the set of sinks is a PCP-kernel.
If $D$ is properly-connected,
then by the definition of PCP-kernels each vertex forms a PCP-kernel.
\end{proof}

By Proposition \ref{proposition: pcp-kernels in digraphs} (i),
every arc-colored digraph containing no cycle has a PCP-kernel.
It is natural to ask what is the analogous answer for a digraph $D$ containing cycles.
For the simplest case, i.e., $D$ is a cycle, we get the following result.

\begin{theorem}\label{thm: pcp-kernels in cycles}
An arc-colored cycle has a PCP-kernel if and only if it is not a monochromatic odd cycle.
\end{theorem}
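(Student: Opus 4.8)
The plan is to analyze an arc-colored cycle $C=(u_0,u_1,\dots,u_{k-1},u_0)$ according to how its coloring behaves, using Observation~\ref{observation: kernels in D and its closure} to reduce the question to whether the closure $\mathscr{C}(C)$ has a kernel. The first step is to settle the ``only if'' direction: I would show directly that a monochromatic odd cycle has no PCP-kernel. Indeed, in a monochromatic cycle the only PC paths are the single arcs (two consecutive arcs share the one color), so $\mathscr{C}(C)=C$ itself; an odd cycle has no kernel by Theorem~\ref{thm: kernels in digraphs}(iii) (or by a direct parity argument), and so by Observation~\ref{observation: kernels in D and its closure} a monochromatic odd cycle has no PCP-kernel.

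For the ``if'' direction I would split into three cases. \emph{Case 1: the coloring of $C$ is proper.} Then $C$ is itself a PC cycle, every vertex reaches every other vertex along $C$ by a PC path, so $C$ is properly-connected and Proposition~\ref{proposition: pcp-kernels in digraphs}(iii) gives a PCP-kernel (any single vertex). \emph{Case 2: $C$ is monochromatic and $k$ is even.} Then $\mathscr{C}(C)=C$ is an even cycle, which has a kernel by Theorem~\ref{thm: kernels in digraphs}(ii) (take alternate vertices), hence $C$ has a PCP-kernel. \emph{Case 3: the coloring is neither proper nor monochromatic.} This is the main case. Here there exist two consecutive arcs $u_{i-1}u_i$ and $u_i u_{i+1}$ of the same color, so $u_i$ is not an internal vertex of any PC path traversing $C$; and since the coloring is not monochromatic, there also exist two consecutive arcs of different colors somewhere.

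The key observation for Case~3 is to understand the closure $\mathscr{C}(C)$ explicitly. Call a vertex $u_i$ a \emph{stop} if its two incident arcs $u_{i-1}u_i$, $u_iu_{i+1}$ have the same color; every PC path in $C$ is an arc of $C$ or a sub-path of $C$ whose internal vertices are all non-stops. Since the coloring is not monochromatic there is at least one non-stop, and since it is not proper there is at least one stop; I would pick a stop $u_i$ and argue that the single vertex $\{u_{i}\}$, or a suitably chosen small set, works — more robustly, I expect the cleanest route is to exhibit a kernel of $\mathscr{C}(C)$ directly. A natural candidate is the set $S$ obtained by walking around $C$ and greedily selecting vertices: because there is at least one stop, the ``PC-reachability'' relation on $C$ is not a single transitive tournament on all vertices, and one can show $\mathscr{C}(C)$ is a ``circular interval-like'' digraph that always admits an independent dominating set. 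The main obstacle will be handling the interaction of several maximal PC sub-paths of $C$: a stop vertex $u_i$ is reachable only from vertices on the maximal PC sub-path ending at $u_i$ (and from $u_{i-1}$), and reaches only vertices on the maximal PC sub-path starting at $u_i$ (and $u_{i+1}$), so I would put all stops into $S$ when this set is independent in $\mathscr{C}(C)$, and otherwise resolve the remaining conflicts among consecutive PC-reachable stops by a short case check; verifying independence and domination of the resulting $S$ in $\mathscr{C}(C)$, and checking the small ($k\le 3$) boundary cases by hand, is the part requiring the most care.
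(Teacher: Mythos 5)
Your necessity argument and the two easy cases of sufficiency (proper coloring via Proposition~\ref{proposition: pcp-kernels in digraphs}(iii), monochromatic even cycle via alternate vertices) are correct and coincide with the paper's treatment. The problem is your Case~3, which is the entire substance of the theorem: there you only describe a plan, and the plan as stated does not work without the very construction you defer. Your first candidate, the set of all ``stop'' vertices, fails independence immediately: if a maximal monochromatic run has length at least $3$, two consecutive stops on it are joined by an arc of $C$, which is itself a PC path; moreover the last stop of one run can reach the first stop of the next run by the PC path running through the properly colored segment between the runs, so even well-separated stops may conflict in $\mathscr{C}(C)$. The assertion that $\mathscr{C}(C)$ is a ``circular interval-like'' digraph that ``always admits an independent dominating set'' is exactly the statement to be proved in this case, so invoking it without proof is circular, and the ``short case check'' you postpone is not short --- it is the heart of the matter.

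What the paper actually does at this point is give an explicit selection rule and verify it: for each maximal monochromatic path $P_i=(v_{n_i},\ldots,v_{n'_i})$ of length at least two (taken in order around the cycle), it puts into $S$ the vertices $v_{n'_i-2},v_{n'_i-4},\ldots,v_{n'_i-2t_i}$, i.e.\ every second vertex counted backwards from the \emph{penultimate} vertex of the run, and then checks two things: no two chosen vertices are joined by a PC path (this is where anchoring at $v_{n'_i-2}$ rather than at the last stop matters), and every omitted vertex reaches $S$ --- the omitted vertices interior to a run reach the next chosen vertex by a single arc, and the vertices in the properly colored gaps between runs reach $v_{n'_i-2t_i}$ by a PC path entering the next run. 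Your proposal identifies the right structural objects (maximal monochromatic runs and their internal stops) but does not supply this selection rule nor the independence/domination verification, so the main case remains unproved; to complete it you would essentially have to reproduce the paper's alternating construction and its checks.
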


Call a digraph {\em unicyclic} if it contains exactly one cycle.
Note that every cycle is unicyclic.
For general arc-colored unicyclic digraphs,
furthermore, for general digraphs containing cycles,
a number of examples
(see for example the arc-colored digraphs
in Figures \ref{figure: a 2-colored digraph with no pcp-kernel} and \ref{figure: a 3-colored bipartite tournament with no PCP-kernel})
show that additional conditions  are needed to guarantee the existence of PCP-kernels.
But what kind of conditions do we need?
By Proposition \ref{proposition: pcp-kernels in digraphs} (ii) and (iii),
if the coloring is proper or ``close" to proper (roughly speaking),
then it has a PCP-kernel.
By Proposition \ref{proposition: pcp-kernels in digraphs} (i),
the existence of cycles influences the existence of PCP-kernels.
This yields a natural question to ask whether the condition ``all cycles are properly colored" suffices or not.
Based on this consideration, we propose the following conjecture.

\begin{conjecture}\label{conj: pcp-kernels in digraphs}
Every arc-colored digraph with all cycles properly colored has a PCP-kernel.
\end{conjecture}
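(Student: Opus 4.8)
The plan would be to route everything through the closure. By Observation~\ref{observation: kernels in D and its closure} it suffices to show that $\mathscr{C}(D)$ has a kernel whenever every cycle of $D$ is properly colored, and by Theorem~\ref{thm: kernels in digraphs}$(ii)$ we may assume that $\mathscr{C}(D)$ contains an odd cycle, since otherwise it already has a kernel. So fix an odd cycle $C=(v_{0},v_{1},\ldots,v_{2k},v_{0})$ of $\mathscr{C}(D)$; each arc $v_{i}v_{i+1}$ is witnessed by some PC $(v_{i},v_{i+1})$-path $P_{i}$ in $D$. The goal would then be to verify on $C$ one of the three conditions of Theorem~\ref{thm: kernels under some constraits for odd cycles} (a symmetrical arc, two crossing consecutive arcs, or two chords with adjacent heads), and to conclude that $\mathscr{C}(D)$ has a kernel. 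The natural candidates are: a symmetrical arc of $C$, which appears as soon as, for some $i$, there is both a PC $(v_{i},v_{i+1})$-path and a PC $(v_{i+1},v_{i})$-path in $D$; and a pair of short chords $v_{i}v_{i+2}$, $v_{i+1}v_{i+3}$ obtained by composing consecutive witness paths. The whole point is to use the hypothesis ``every cycle of $D$ is properly colored'' to control how the paths $P_{i}$ interact, since without it nothing can be composed.

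The technical heart would be a gluing/rerouting lemma. To realize the chord $v_{i}v_{i+2}$ in $\mathscr{C}(D)$ one tries to splice $P_{i}$ and $P_{i+1}$ into a single PC $(v_{i},v_{i+2})$-path. Two obstructions occur. First, $P_{i}$ and $P_{i+1}$ may meet at internal vertices; following $P_{i}$ up to a suitable common vertex and then switching to $P_{i+1}$ shortens the walk, and the discarded portion closes up into a cycle of $D$, which is properly colored by hypothesis $-$ this is exactly where the coloring information at the splice should come from. Iterating such reductions should eventually produce a genuine path, with the properness of the successively discarded cycles keeping the colors at the splices compatible. Second, the last arc of $P_{i}$ and the first arc of $P_{i+1}$ may have the same color; here one has to move the splice point around $C$, bringing in $P_{i+2},P_{i+3},\ldots$ and again using the properness of the intervening cycles of $D$ to keep the two colors at each junction distinct. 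If this local analysis can be pushed through in all configurations, it delivers in each case a short PC path realizing a chord or a symmetrical arc of the required type, and the conjecture follows.

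The step I expect to be the real obstacle is precisely this gluing lemma: PC-reachability is \emph{not} transitive, so arcs of $\mathscr{C}(D)$ do not compose, and there is no a priori reason that the rerouting stays local $-$ it may genuinely need a global argument, and the cycles one has to work with can be long. This is why the statement is offered only as Conjecture~\ref{conj: pcp-kernels in digraphs}, and why in the sequel it is confirmed only for unicyclic digraphs, semi-complete digraphs and bipartite tournaments, i.e.\ for classes in which the possible cycles, and hence the possible PC paths, are restricted enough to be handled directly. An alternative worth trying is the minimal-counterexample route: the hypothesis is hereditary, since every cycle of a subdigraph of $D$ is a cycle of $D$ and hence properly colored, and by the usual peeling-off of terminal strong components one may assume that $\mathscr{C}(D)$ is strongly connected; one would then aim to show that $\mathscr{C}(D)$ is kernel-perfect by exhibiting a suitable semikernel in each of its induced subdigraphs. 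The same non-transitivity bites again, however: a semikernel of a subdigraph of $\mathscr{C}(D)$ need not be a semikernel of $\mathscr{C}(D)$, so this approach also seems to call for a genuinely new idea.
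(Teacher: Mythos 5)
The statement you were asked about is Conjecture~\ref{conj: pcp-kernels in digraphs}: the paper offers no proof of it, only verifications for unicyclic digraphs, semi-complete digraphs and bipartite tournaments, so there is no ``paper proof'' to compare against. Your proposal is, correctly, not a proof either; it is a plan, and you are right to flag the gluing step as the genuine obstacle. Two remarks on the plan itself. First, your symmetrical-arc strategy via Theorem~\ref{thm: kernels under some constraits for odd cycles}$(i)$ is exactly the route the paper takes in its strongest partial result: in the bipartite-tournament case the authors prove that whenever $uv\in A(\mathscr{C}(D))$ and $vu\notin A(\mathscr{C}(D))$ one has $dist(u,v)\leq 2$ in $D$ (Lemma~\ref{lemma: any path to a 2-path}), and then show every cycle of $\mathscr{C}(D)$ has a symmetrical arc; that argument leans heavily on the rigid structure of bipartite tournaments (Lemma~\ref{lemma: walks in bipartite tournaments}), which is precisely the ``restricted cycles'' phenomenon you point to, and it does not generalize as stated. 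Second, there is a concrete wrinkle in your splicing step beyond non-transitivity: when $P_{i}$ and $P_{i+1}$ meet at an internal vertex and you switch tracks, the discarded portion is only a closed \emph{walk} in $D$, not necessarily a cycle, so the hypothesis ``all cycles properly colored'' does not directly control the colors at the splice; one would first have to decompose or reroute the walk into cycles, and the properness of the individual cycles still does not force the two arcs meeting at the junction of the retained pieces to differ in color. So the gap you name is real, it is the reason the statement is left as a conjecture, and your sketch should be read as a sensible research direction consistent with the paper's partial evidence rather than as an argument that could be completed by routine means.
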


\noindent
\textbf{Remark 1.}
If Conjecture \ref{conj: pcp-kernels in digraphs} is true,
then it is best possible in view of the two arc-colored digraphs in Figure \ref{figure: a 2-colored digraph with no pcp-kernel},
in which solid arcs, dotted arcs and dashed arcs represent arcs colored by three distinct colors respectively.
It is not difficult to check that neither of them has a PCP-kernel.
For any even integer $n\geq 6$ (resp. odd integer $n\geq 7$),
the sharpness of Conjecture \ref{conj: pcp-kernels in digraphs}
can be shown by replacing the path $(v_{6},v_{1},v_{2})$ (resp. $(u_{9},u_{1},u_{2})$) of the left digraph (resp. the right digraph)
by a monochromatic path of length $n-4$ (resp. length $n-7$) using the color assigned to the previous short path.
One can check that neither of the two new constructed digraphs has a PCP-kernel.

\begin{figure}[ht]
\begin{center}
\begin{tikzpicture}
\tikzstyle{vertex}=[circle,inner sep=2pt, minimum size=0.1pt]

%~~~~~~~~~~~~~~~~~~~~~~~~~~~~~~~~~~~~~~~~~~~~~~~~~~~~~~~~~~~~~~~~~~

\vertex (a)[fill] at (-5,0)[label=left:$v_{1}$]{};
\vertex (b)[fill] at (-4,1.5)[label=above:$v_{2}$]{};
\vertex (c)[fill] at (-2,1.5)[label=above:$v_{3}$]{};
\vertex (d)[fill] at (-1,0)[label=right:$v_{4}$]{};
\vertex (e)[fill] at (-2,-1.5)[label=below:$v_{5}$]{};
\vertex (f)[fill] at (-4,-1.5)[label=below:$v_{6}$]{};

\vertex (g)[fill] at (1,0)[label=left:$u_{1}$]{};
\vertex (h)[fill] at (2,1.5)[label=above:$u_{2}$]{};
\vertex (i)[fill] at (4,1.5)[label=above:$u_{3}$]{};
\vertex (j)[fill] at (5,1)[label=right:$u_{4}$]{};
\vertex (k)[fill] at (4,0.5)[label=left:$u_{5}$]{};
\vertex (l)[fill] at (4,-0.5)[label=left:$u_{6}$]{};
\vertex (m)[fill] at (5,-1)[label=right:$u_{7}$]{};
\vertex (n)[fill] at (4,-1.5)[label=below:$u_{8}$]{};
\vertex (o)[fill] at (2,-1.5)[label=below:$u_{9}$]{};

%~~~~~~~~~~~~~~~~~~~~~~~~~~~~~~~~~~~~~~~~~~~~~~~~~~~~~~~~~~~~~~~~~~

\draw [directed,line width=1pt] (a)--(b);
\draw [directed,line width=1pt] (b)--(c);
\draw [directed,line width=1pt] (c)--(d);
\draw [directed,line width=1pt] (e)--(f);
\draw [directed,line width=1pt] (f)--(a);

\draw [directed,dotted,line width=1pt] (c)--(e);
\draw [directed,dotted,line width=1pt] (e)--(d);

\draw [directed,line width=1.2pt] (g)--(h);
\draw [directed,line width=1pt] (h)--(i);
\draw [directed,line width=1pt] (i)--(j);
\draw [directed,line width=1pt] (k)--(l);
\draw [directed,line width=1pt] (l)--(m);
\draw [directed,line width=1pt] (n)--(o);
\draw [directed,line width=1pt] (o)--(g);

\draw [directed,dotted,line width=1pt] (i)--(k);
\draw [directed,dotted,line width=1pt] (k)--(j);

\draw [directed,dashed,line width=1pt] (l)--(n);
\draw [directed,dashed,line width=1pt] (n)--(m);

%~~~~~~~~~~~~~~~~~~~~~~~~~~~~~~~~~~~~~~~~~~~~~~~~~~~~~~~~~~~~~~~~~~
\end{tikzpicture}
\end{center}
\caption{Two arc-colored digraphs with no PCP-kernels.}
\label{figure: a 2-colored digraph with no pcp-kernel}
\end{figure}
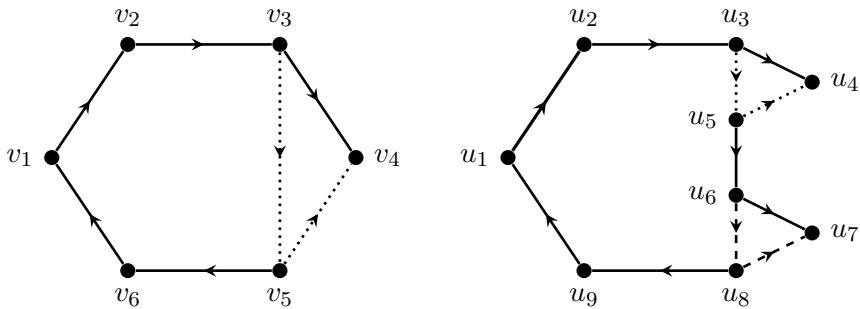

A digraph $D$ is {\em semi-complete} if for every two vertices there exists at least one arc between them.
A {\em tournament} ({\em bipartite tournament})
is an orientation of a complete graph (complete bipartite graph).
Note that each tournament is semi-complete.
Theorem \ref{thm: pcp-kernels in cycles} shows that Conjecture \ref{conj: pcp-kernels in digraphs} holds for cycles.
We will also show that Conjecture \ref{conj: pcp-kernels in digraphs} holds for general unicyclic digraphs, semi-complete digraphs and bipartite tournaments.
In fact, for the latter two classes of digraphs,
weaker conditions have been obtained, respectively.

\begin{theorem}\label{thm: pcp-kernels in unicyclic digraphs}
Every arc-colored unicyclic digraph with the unique cycle properly colored has a PCP-kernel.
\end{theorem}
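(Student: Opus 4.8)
The plan is to pass to the closure and use Observation~\ref{observation: kernels in D and its closure}: it suffices to exhibit a kernel of $\mathscr{C}(D)$. Write $C=(c_{0},c_{1},\dots,c_{k-1},c_{0})$ for the unique cycle of $D$ and put $W=V(C)$. The first and main step is to pin down the structure of $\mathscr{C}(D)$ forced by unicyclicity. Since a chord of $C$, or a path of $D$ that leaves $W$ and later returns to $W$, would create a second cycle, neither exists; consequently every closed walk of $D$ stays inside $W$ and merely runs around $C$. From this one gets two facts. First, for all $i\neq j$ the unique $(c_{i},c_{j})$-path contained in $C$ is a sub-path of the properly colored cycle $C$, hence is properly colored, so $c_{i}c_{j}\in A(\mathscr{C}(D))$; that is, $\mathscr{C}(D)[W]$ is the complete digraph on $W$ (so it, and each of its induced subdigraphs, has a kernel, and it is strongly connected). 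Second, every cycle of $\mathscr{C}(D)$ lies in $W$, because concatenating the PC paths witnessing the arcs of such a cycle yields a closed walk of $D$; hence $W$ is the unique non-trivial strong component of $\mathscr{C}(D)$ and all other strong components are single vertices.

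Given this, I would build a kernel $S$ of $\mathscr{C}(D)$ by the usual reverse-topological sweep over the strong components (processing a component only after all components reachable from it have been processed), adding vertices to $S$ as follows. When the current component is a single vertex $x$, put $x$ into $S$ unless $x$ already has an out-arc to $S$. When the current component is $W$, let $W_{0}$ be the set of vertices of $W$ that already have an out-arc to $S$; if $W_{0}=W$ add nothing, and otherwise add a single vertex $c^{*}\in W\setminus W_{0}$. The choice of order guarantees that at the moment a vertex is added it has no out-arc to $S$ and no in-arc from $S$ (an in-arc would come from a not-yet-processed component), so $S$ remains independent; and completeness of $\mathscr{C}(D)[W]$ guarantees that after processing $W$ every vertex of $W$ is in $S$ or has an out-arc to $S$, while $c^{*}\notin W_{0}$ forbids the only possible arcs between $c^{*}$ and the previously chosen part of $S$. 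At the end $S$ is an independent set such that every vertex outside $S$ has an out-arc into $S$, i.e.\ a kernel of $\mathscr{C}(D)$; by Observation~\ref{observation: kernels in D and its closure}, $D$ has a PCP-kernel. (Equivalently: every strong component of $\mathscr{C}(D)$ is a single vertex or a complete digraph, hence is kernel-perfect, so the standard argument that a digraph all of whose strong components are kernel-perfect has a kernel applies.)

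The step I expect to be the real work is the structural claim of the first paragraph — that unicyclicity confines all cyclic behaviour of $\mathscr{C}(D)$ to $W$ and that $\mathscr{C}(D)[W]$ is complete. One has to argue carefully that a $D$-path joining two vertices of $C$, and more generally a closed walk of $D$, cannot stray to a vertex off $C$, since each such detour would manufacture a cycle other than $C$; this is the only place where the hypotheses ``unicyclic'' and ``the cycle is properly colored'' are genuinely used. After that, the kernel itself comes out of the routine reverse-topological / Richardson-type construction, whose single delicate point — that a newly added vertex must have neither an out-arc nor an in-arc to the set built so far — is handled automatically by the topological order together with the completeness of $\mathscr{C}(D)[W]$.
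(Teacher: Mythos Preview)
Your proposal is correct and follows essentially the same route as the paper. The paper works directly with the strong components of $D$ (the cycle $C$ and singletons) and builds the PCP-kernel by a greedy reverse-topological sweep using PC paths, while you first pass to $\mathscr{C}(D)$ via Observation~\ref{observation: kernels in D and its closure}, identify the same component structure there (the complete digraph on $W=V(C)$ together with singletons), and then run the identical greedy construction for an ordinary kernel; the two arguments are the same up to this change of language.
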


\noindent
\textbf{Remark 2.}
We see from the two unicyclic arc-colored digraphs in Figure \ref{figure: a 2-colored digraph with no pcp-kernel}
that the condition ``the unique cycle is properly colored" cannot be dropped in Theorem \ref{thm: pcp-kernels in unicyclic digraphs}.

Note that every two vertices in a semi-complete digraph are adjacent
and thus every PCP-kernel in such a digraph consists of one vertex.
We obtain the following result whose proof idea is similar to that in \cite{Shen1988}.

\begin{theorem}\label{thm: PCP kernels in semi-complete digraphs}
Every arc-colored semi-complete digraph with no monochromatic triangles has a vertex $v$
such that all other vertices can reach $v$ by a PC path of length at most 3.
\end{theorem}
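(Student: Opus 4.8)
The plan is to induct on $n=|V(D)|$, broadly along the lines of \cite{Shen1988}. The cases $n\le 3$ are easily checked; the only one needing a word is $n=3$ with $D$ strongly connected, where $D$ contains a triangle, this triangle is not monochromatic by hypothesis, and starting it at the vertex where a colour change occurs yields a vertex reached by the other two along PC paths of length $1$ and $2$.

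For the inductive step I would first reduce to the strongly connected case. If $D$ is not strongly connected, let $C$ be its terminal strong component; since between any two distinct strong components all arcs go the same way, every vertex outside $C$ has an arc into each vertex of $C$. Applying the induction hypothesis to $D[C]$ gives a vertex $v\in C$ reached by every other vertex of $C$ along a PC path of length $\le 3$ inside $D[C]$, hence inside $D$ because $C$ has no out-arcs; and every vertex outside $C$ reaches $v$ by one arc. Thus $v$ works, and we may assume $D$ is strongly connected.

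Next, remove a vertex $x$ of minimum in-degree in $D$ and apply the induction hypothesis to $D-x$, obtaining a vertex $v\ne x$ reached by every vertex of $D-\{x,v\}$ along a PC path of length $\le 3$. If $x$ does too, $v$ works for $D$; so suppose not. Then $xv\notin A(D)$, hence $vx\in A(D)$; for each $z$ with $x\to z\to v$ the walk $xzv$ is not properly coloured, so $c(xz)=c(zv)$, and feeding the triangle $v\to x\to z\to v$ into the hypothesis forces $c(vx)\ne c(xz)$; and no PC $(x,v)$-path has length $3$. At least one such $z$ exists: otherwise every in-neighbour of $v$ is an in-neighbour of $x$ while $v$ is an in-neighbour of $x$ but not of $v$, so $d^-(x)\ge d^-(v)+1$, contradicting the minimality of $d^-(x)$. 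This leaves a tightly constrained local configuration at $x$ and $v$.

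The main obstacle is to finish this last case. The idea is to apply the no-monochromatic-triangle hypothesis repeatedly --- to the triangles spanned by $x$, $v$, and one or two of the vertices $z$ with $x\to z\to v$, and to the arcs among those $z$'s and from them to the remaining in-neighbours of $v$ --- and to conclude that either some $(x,v)$-walk of length $2$ or $3$ is in fact properly coloured (contradicting the assumption), or that $x$ itself, or one of those $z$'s, is reached by every other vertex of $D$ along a PC path of length $\le 3$ and so serves as the target for $D$. The real difficulty is that properly coloured paths do not concatenate --- gluing one more arc onto a PC path of length $3$ produces a walk of length $4$ --- so the familiar extremal trick of taking a vertex of maximum in-degree and extending does not on its own close the argument, and the triangle hypothesis has to be exploited rather tightly; the fact that a semi-complete digraph, unlike a tournament, may have $2$-cycles adds a further layer to the case analysis on the neighbourhoods of $x$ and $v$.
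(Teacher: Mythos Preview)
Your setup is sound: the base cases, the reduction to the strongly connected (or, as the paper does more simply, to the tournament) case, and the observation that if $x$ fails to reach the good vertex $v$ of $D-x$ then $vx\in A(D)$, every $z$ with $x\to z\to v$ has $c(xz)=c(zv)$, and at least one such $z$ exists by the in-degree minimality of $x$. The gap is exactly where you say it is: the ``finish this last case'' step. Removing a single vertex and trying to argue that $x$, or one of the $z$'s, is good for $D$ does not carry enough structure to succeed, for precisely the reason you yourself identify --- PC paths do not concatenate, so the length-$\le 3$ information you have about $v$ in $D-x$ cannot be transferred to any other candidate vertex. You have not supplied a mechanism to close this, and I do not see one along these lines; the vague plan of ``applying the no-monochromatic-triangle hypothesis repeatedly'' to the local picture around $x$ and $v$ is not an argument.

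The paper's device, which bypasses this dead end, is to treat all vertices symmetrically rather than singling out one $x$. In a minimum counterexample $T$, for \emph{every} vertex $v$ let $v^*$ be a good vertex of $T-v$; then $v^*\to v$ and there is no PC $(v,v^*)$-path of length $\le 3$ in $T$. One checks $u^*\ne v^*$ for $u\ne v$, so the arcs $\{v^*v:v\in V(T)\}$ form a disjoint union of cycles; if there were two or more, induction on the subtournament spanned by any one of them would already yield a good vertex of $T$. Hence they form a single Hamiltonian cycle $v_0v_1\cdots v_{n-1}v_0$, with the key feature that for \emph{every} $i$ there is no PC $(v_i,v_{i-1})$-path of length $\le 3$. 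This density of constraints is what makes the rest go through: first $v_i\to v_{i+2}$ for all $i$ (a non-monochromatic triangle on $v_i,v_{i+1},v_{i+2}$ would otherwise give a forbidden PC $2$-path), and then, taking the least $s$ with $v_s\to v_0$, a short cascade of colour equalities on the arcs $v_0v_i$ and inequalities on $v_iv_s$ (for $1\le i\le s-1$) forces the PC path $(v_1,v_s,v_0)$ of length $2$, the desired contradiction. Your single deletion tells you only that one pair is obstructed; the permutation argument tells you that every consecutive pair along a Hamiltonian cycle is, and that is what the colour analysis needs.
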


\begin{corollary}\label{corollary: PCP kernels in semi-complete digraphs}
Every arc-colored semi-complete digraph with no monochromatic triangles has a PCP-kernel.
\end{corollary}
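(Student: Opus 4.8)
The plan is to obtain the corollary as an immediate consequence of Theorem \ref{thm: PCP kernels in semi-complete digraphs}. Let $D$ be an arc-colored semi-complete digraph with no monochromatic triangles, and let $v$ be a vertex of $D$ of the type produced by that theorem, so that every vertex of $D$ different from $v$ can reach $v$ by a PC path (the bound of length at most $3$ is more than we need here). I would set $S=\{v\}$ and verify the two defining conditions of a PCP-kernel. Condition (i), that no two vertices of $S$ are joined by a PC path, holds vacuously since $S$ is a singleton. Condition (ii), that every vertex outside $S$ reaches $S$ by a PC path, is precisely the conclusion of Theorem \ref{thm: PCP kernels in semi-complete digraphs}. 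Hence $\{v\}$ is a PCP-kernel of $D$.

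As an equivalent route, one can appeal to Observation \ref{observation: kernels in D and its closure}: it suffices to exhibit a kernel of the closure $\mathscr{C}(D)$, and the vertex $v$ above is exactly such a kernel, because in $\mathscr{C}(D)$ every other vertex sends an arc to $v$ while $\{v\}$ contains no arc. Since every two vertices of a semi-complete digraph are adjacent, a PCP-kernel of $D$ is necessarily a single vertex, so no additional configurations need to be considered and there is no case analysis to carry out.

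In short, the entire content of the corollary is already packaged inside Theorem \ref{thm: PCP kernels in semi-complete digraphs}, so there is no substantive obstacle. The only point requiring a little care is to invoke the reachability in the correct direction, namely that the other vertices reach $v$ by PC paths (rather than that $v$ reaches them), which is what the definition of a PCP-kernel demands.
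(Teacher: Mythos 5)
Your proposal is correct and is exactly the intended derivation: the corollary is stated in the paper as an immediate consequence of Theorem \ref{thm: PCP kernels in semi-complete digraphs}, with the singleton $\{v\}$ serving as the PCP-kernel since condition (i) is vacuous for a single vertex and condition (ii) is the theorem's conclusion. Your remark about the direction of reachability (others reaching $v$) is the right point of care, and no further argument is needed.
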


\noindent
\textbf{Remark 3.}
The condition ``with no monochromatic triangles" in Theorem \ref{thm: PCP kernels in semi-complete digraphs} and Corollary \ref{corollary: PCP kernels in semi-complete digraphs} cannot be dropped.
Recall that every tournament is semi-complete
and one can verify that the 2-colored tournament shown in Figure \ref{figure: a 2-colored tournament} 
has no PCP-kernels and no vertex defined in Theorem \ref{thm: PCP kernels in semi-complete digraphs},
in which solid arcs and dotted arcs represent arcs colored by two distinct colors, respectively.
Larger $m$-colored tournaments containing no PCP-kernel for general $m$
can be constructed by adding new vertices together with new colors to the new added arcs
such that $T^{*}$ has no outneighbors in the set of new added vertices.

\begin{figure}[ht]
\begin{center}
\begin{tikzpicture}[x=1.2cm, y=1.2cm, every edge/.style=
        {
        draw,line width=1pt,
        postaction={decorate,
                     decoration={markings,mark=at position 0.5 with {\arrow{stealth}}}
                     %other arrow styles:>, <, |, stealth, latex, stealth reversed
                   }
        }
]
\tikzstyle{vertex}=[circle,inner sep=2pt, minimum size=0.1pt]
%~~~~~~~~~~~~~draw grid~~~~~~~~~~~~~~~~~~~~~~~~~~~~~~~~~~~~~~~~~~~
%    \draw[step=1,gray,very thin] (-6, -6) grid (6, 6);
%~~~~~~~~~~~~~~~~~~~~~~~~~~~~~~~~~~~~~~~~~~~~~~~~~~~~~~~~~~~~~~~~~

    \vertex (a)[fill] at (0,2) [label=above:{$v_{1}$}]{};

    \vertex (b)[fill] at (0,0) [label=left:{$v_{2}$}]{};

    \vertex (c)[fill] at (-1.732,-1) [label=left:{$v_{3}$}]{};

    \vertex (d)[fill] at (1.732,-1) [label=right:{$v_{4}$}]{};
%---------------------------------------------------------------~~
    \path (b) edge[dotted] (c);
    \path (c) edge[dotted] (d);
    \path (d) edge[dotted] (b);
    \path (a) edge[] (b);
    \path (a) edge[] (c);
    \path (a) edge[] (d);
%~~~~~~~~~~~~~~~~~~~~~~~~~~~~~~~~~~~~~~~~~~~~~~~~~~~~~~~~~~~~~~~~~~
\end{tikzpicture}
\end{center}
\caption{A 2-colored tournament with no PCP-kernels.}
\label{figure: a 2-colored tournament}
\end{figure}
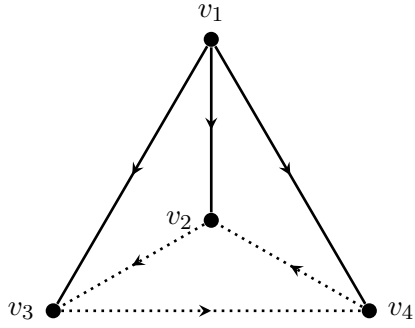

\begin{theorem}\label{thm: pcp-kernels in bipartite tournaments}
Every arc-colored bipartite tournament $D=(X,Y;A)$ with
$(i)$ all $4$-cycles and $6$-cycles properly colored, or
$(ii)$ $\min\{|X|,|Y|\}\leq 2$,
has a PCP-kernel.
\end{theorem}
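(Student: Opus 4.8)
The plan is to pass to the closure: by Observation~\ref{observation: kernels in D and its closure} it is enough to show that $\mathscr{C}(D)$ has a kernel, and for this I aim to verify a hypothesis of Theorem~\ref{thm: kernels under some constraits for odd cycles}, namely that (for part (i)) every directed cycle of $\mathscr{C}(D)$ has a symmetrical arc. The starting point is a parity remark valid for any arc-colored bipartite tournament: if $(v_1,\dots,v_n,v_1)$ is a directed cycle of $\mathscr{C}(D)$, then picking a PC $(v_i,v_{i+1})$-path of $D$ for each $i$ and concatenating these produces a closed walk in the (bipartite) underlying graph of $D$; hence the number of indices $i$ with $v_i,v_{i+1}$ in different parts is even. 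In particular every \emph{odd} cycle of $\mathscr{C}(D)$ has at least one arc joining two vertices of a common part of $D$, while a cycle using only ``cross'' arcs is automatically even. So the decisive task is to reverse same-part arcs of $\mathscr{C}(D)$.

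For part (i), suppose $v,v'$ lie in the same part, say $X$, and $vv'\in A(\mathscr{C}(D))$; I will try to prove $v'v\in A(\mathscr{C}(D))$ as well. Fix a PC $(v,v')$-path $P$ of $D$ of smallest possible length (it has even length $\ge 2$). If $P=(v,u,v')$ has length $2$ and there exists $u''\in Y$ with $v'\to u''\to v$ in $D$, then $(v,u,v',u'',v)$ is a genuine $4$-cycle of $D$ (note $u\ne u''$, since a bipartite tournament has no $2$-cycle), so it is properly colored by hypothesis, giving $c(v'u'')\ne c(u''v)$ and hence the PC $(v',v)$-path $(v',u'',v)$. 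If $P=(v,u_1,w,u_2,v')$ has length $4$ and there exists $u''\in Y$ with $v'\to u''\to v$ in $D$, then $(v,u_1,w,u_2,v',u'',v)$ is a genuine $6$-cycle of $D$, properly colored by hypothesis, and again $(v',u'',v)$ is a PC path. So in these cases the arc is reversed. The core difficulties, which I expect to occupy most of the proof, are: (a) showing that on a minimal counterexample one may assume the realizing path $P$ has length at most $4$ --- the $4$- and $6$-cycle hypotheses say nothing about longer cycles, so a minimal-counterexample reduction (minimizing, say, a shortest offending cycle of $\mathscr{C}(D)$ together with the realizer lengths along it, and rerouting an arc with a length-$\ge 6$ realizer through its interior $X$-vertices, each such sub-path being again PC and hence yielding arcs of $\mathscr{C}(D)$) is needed to discard long realizers; and (b) handling the case where no vertex $u''$ with $v'\to u''\to v$ exists, which I expect to force, via the same $4$-/$6$-cycle analysis applied to a neighbouring arc of the offending cycle, either a pair of crossing consecutive arcs on that cycle (so that Theorem~\ref{thm: kernels under some constraits for odd cycles}(ii) applies) or a strictly shorter offending cycle.

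For part (ii), assume $\min\{|X|,|Y|\}\le 2$, say $|Y|\le 2$; the key simplification is that every PC path of $D$ then has length at most $4$, since such a path meets each vertex of $Y$ at most once. If $|Y|\le 1$, the out-neighbours of the unique vertex of $Y$ are sinks of $D$, so one checks directly that $\mathscr{C}(D)$ is acyclic and Theorem~\ref{thm: kernels in digraphs}(i) applies. If $|Y|=2$, the explicit description of $\mathscr{C}(D)$ afforded by the length-$\le 4$ bound reduces the problem to a short case analysis, organized by the arcs of $\mathscr{C}(D)$ inside $Y$ and by which vertices of $X$ are sinks of $D$; in each case one either exhibits a one- or two-vertex absorbing set or checks the symmetrical-arc condition of Theorem~\ref{thm: kernels under some constraits for odd cycles}(i) directly. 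I expect part (ii) to be essentially routine once the bound on PC-path lengths is established, the only mild annoyance being the bookkeeping in the $|Y|=2$ case.
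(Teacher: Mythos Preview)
Your high-level strategy for part (i)---pass to the closure and verify the symmetrical-arc hypothesis of Theorem~\ref{thm: kernels under some constraits for odd cycles}(i)---matches the paper's. But the engine driving the paper's argument is a lemma you have not identified: \emph{if $D$ contains a PC $(u,v)$-path but no PC $(v,u)$-path, then $dist(u,v)\le 2$} (ordinary distance in $D$, not PC-path length). Almost all of the work with PC $4$- and $6$-cycles goes into proving this lemma. With it in hand, one takes a putative cycle $C=(u_0,\dots,u_l,u_0)$ of $\mathscr{C}(D)$ with no symmetrical arc, replaces each arc $u_iu_{i+1}$ by a shortest $(u_i,u_{i+1})$-path in $D$ of length $\le 2$, and analyses the resulting short closed walk $C^*$; in each of the cases $l=2$ and $l\ge 3$ one locates a $4$- or $6$-cycle of $D$ whose proper coloring forces a symmetrical arc on $C$ after all.

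Your route through same-part arcs runs into precisely the obstacles you flag, and they are not resolved. Your difficulty (a) aims for the wrong bound: you try to cap the length of a \emph{PC realizer} at $4$, whereas what the paper proves and uses is that the \emph{shortest path} (not required to be PC) has length $\le 2$; the concatenated closed walk $C^*$ is then not assumed to be properly colored, which is what makes the subsequent $4$-/$6$-cycle analysis go through. Your difficulty (b) is real, and the proposed fallback to crossing consecutive arcs is not how the paper proceeds---indeed the paper never singles out same-part arcs at all, so your parity remark, while correct, is a detour. Note also that Theorem~\ref{thm: kernels under some constraits for odd cycles}(i) demands a symmetrical arc on \emph{every} cycle of $\mathscr{C}(D)$, and your outline says nothing about even cycles whose arcs are all cross arcs.

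For part (ii) your bound ``every PC path has length $\le 4$'' is correct, and an argument via $\mathscr{C}(D)$ may be feasible, but this is not what the paper does: it gives a direct construction of a PCP-kernel by an extended case analysis, partitioning $Y$ according to in/out-neighbours in $X=\{x_1,x_2\}$ and the colors on the arcs incident with $x_1,x_2$, and in each case exhibiting an explicit absorbing independent set. Your sketch for (ii) is plausible but is a different argument and substantially less detailed than the paper's.
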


\noindent
\textbf{Remark 4.}
The conditions in Theorem \ref{thm: pcp-kernels in bipartite tournaments} cannot be dropped
in view of the 3-colored bipartite tournament $D_{6}=(X,Y;A)$ shown in Figure \ref{figure: a 3-colored bipartite tournament with no PCP-kernel},
in which solid arcs, dotted arcs and dashed arcs represent arcs colored by three distinct colors, respectively.
One can see that $\min\{|X|,|Y|\}=3$
and $D_{6}$ contains neither PC 4-cycles nor PC 6-cycles.
One can also see that the closure $\mathscr{C}(D_{6})$ of $D_{6}$ is semi-complete,
in which the new added arcs are represented by thick dashed lines.
Note that a semi-complete digraph has a kernel if and only if it has a vertex $v$ such that
all other vertices can reach $v$ by an arc.
One can see that $\mathscr{C}(D_{6})$ does not contain such a vertex,
so by Observation \ref{observation: kernels in D and its closure} we get that $D_{6}$ has no PCP-kernels.
Furthermore,
we can construct infinite family of bipartite tournaments which can show that the conditions in Theorem \ref{thm: pcp-kernels in bipartite tournaments} cannot be dropped.
Let $D_{n-6}$ be an arbitrary $m$-colored bipartite tournament with $n>6$.
Define $D$ to be the union of $D_{6}$ and $D_{n-6}$ as follows:
take all possible arcs between $D_{n-6}$ and $D_{6}$ going from $D_{n-6}$ to $D_{6}$ and denote this set of arcs by $A^{*}$,
let $V(D)=V(D_{6})\cup V(D_{n-6})$ and $A(D)=A(D_{6})\cup A(D_{n-6})\cup A^{*}$,
let the colors on $D_{n-6}$ and $D_{6}$ remain the same
and let the coloring of $A^{*}$ be arbitrary.
Then $D$ has no PCP-kernel
since the proposition that $D$ has a PCP-kernel implies that $D_{6}$ has a PCP-kernel.

\begin{figure}[ht]
\begin{center}
\begin{tikzpicture}[x=0.6cm, y=0.6cm, every edge/.style=
        {
        draw,line width=1pt,
        postaction={decorate,
                     decoration={markings,mark=at position 0.3 with {\arrow{stealth}}}
                     %other arrow styles:>, <, |, stealth, latex, stealth reversed
                   }
        }
]

%~~~~~~~~~~~~~draw grid~~~~~~~~~~~~~~~~~~~~~~~~~~~~~~~~~~~~~~~~~~~
%    \draw[step=1,gray,very thin] (-6, -6) grid (6, 6);
%~~~~~~~~~~~~~~~~~~~~~~~~~~~~~~~~~~~~~~~~~~~~~~~~~~~~~~~~~~~~~~~~~

    \vertex (x1)[fill] at (-10,2) [label=above:{$x_{1}$}]{};
    \vertex (x2)[fill] at (-6,2) [label=above:{$x_{2}$}]{};
    \vertex (x3)[fill] at (-2,2) [label=above:{$x_{3}$}]{};

    \vertex (y1) at (-10,-2) [label=below:{$y_{1}$}]{};
    \vertex (y2) at (-6,-2) [label=below:{$y_{2}$}]{};
    \vertex (y3) at (-2,-2) [label=below:{$y_{3}$}]{};

%~~~~~~~~~~~~~~~~~~~~~~~~~~~~~~~~~~~~~~~~~~~~~~~~~~~~~~~~~~~~~~~~~

    \path (x1) edge[] (y1);
    \path (y1) edge[] (x2);
    \path (y2) edge[] (x1);

    \path (x2) edge[dotted] (y2);
    \path (y2) edge[dotted] (x3);
    \path (y3) edge[dotted] (x2);

    \path (x3) edge[dashed] (y3);
    \path (y1) edge[dashed] (x3);
    \path (y3) edge[dashed] (x1);

%~~~~~~~~~~~~~~~~~~~~~~~~~~~~~~~~~~~~~~~~~~~~~~~~~~~~~~~~~~~~~~~~~

    \vertex (x1')[fill] at (2,2) [label=above:{$x_{1}$}]{};
    \vertex (x2')[fill] at (6,2) [label=above:{$x_{2}$}]{};
    \vertex (x3')[fill] at (10,2) [label=above:{$x_{3}$}]{};

    \vertex (y1') at (2,-2) [label=below:{$y_{1}$}]{};
    \vertex (y2') at (6,-2) [label=below:{$y_{2}$}]{};
    \vertex (y3') at (10,-2) [label=below:{$y_{3}$}]{};

%~~~~~~~~~~~~~~~~~~~~~~~~~~~~~~~~~~~~~~~~~~~~~~~~~~~~~~~~~~~~~~~~~
    \path (x1') edge[] (y1');
    \path (y1') edge[] (x2');
    \path (y2') edge[] (x1');

    \path (x2') edge[dotted] (y2');
    \path (y2') edge[dotted] (x3');
    \path (y3') edge[dotted] (x2');

    \path (x3') edge[dashed] (y3');
    \path (y1') edge[dashed] (x3');
    \path (y3') edge[dashed] (x1');

    \path (x3') edge[dashed,line width=2pt] (x2');
    \path (x2') edge[dashed,line width=2pt] (x1');
    \path (x1') edge[dashed,line width=2pt,bend left=30] (x3');

    \path (y1') edge[dashed,line width=2pt] (y2');
    \path (y2') edge[dashed,line width=2pt] (y3');
    \path (y3') edge[dashed,line width=2pt,bend left=30] (y1');
    \path (y1') edge[dashed,line width=2pt,bend left=30] (x1');
    \path (y2') edge[dashed,line width=2pt,bend left=30] (x2');
    \path (y3') edge[dashed,line width=2pt,bend right=30] (x3');

%---------------------------------------------------------------~
%~~~~~~~~~~~~~~~~~~~~~~~~~~~~~~~~~~~~~~~~~~~~~~~~~~~~~~~~~~~~~~~~~~
\end{tikzpicture}
\end{center}
\caption{A 3-colored bipartite tournament $D_{6}$ and its closure $\mathscr{C}(D_{6})$.}
\label{figure: a 3-colored bipartite tournament with no PCP-kernel}
\end{figure}
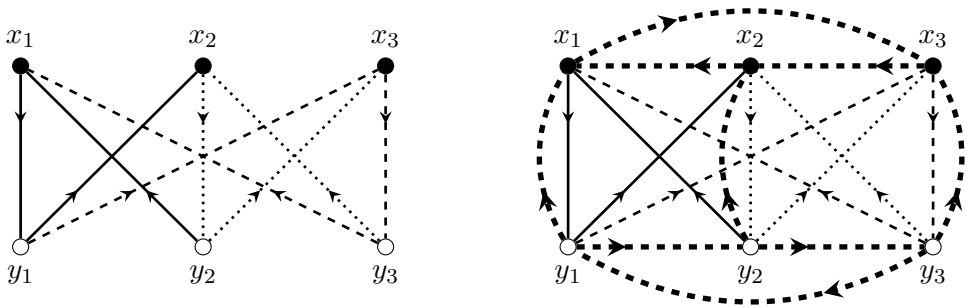

In the rest of the paper,
we always use $H_{1}-H_{2}$ to denote $H_{1}-V(H_{2})$ for two digraphs $H_{1}$ and $H_{2}$;
if $H_{2}$ consists of a single vertex $v$,
then we denote $H_{1}-\{v\}$ by $H_{1}-v$.
For two vertices $u$ and $v$,
if $uv$ is an arc then we say $u$ {\em dominates} $v$ and sometimes write $u\rightarrow v$.

\section{Proofs of Theorem \ref{thm: pcp-kernels in cycles} and Theorem \ref{thm: pcp-kernels in unicyclic digraphs}}

\begin{proof}[\bf{Proof of Theorem \ref{thm: pcp-kernels in cycles}}]
The necessity of Theorem \ref{thm: pcp-kernels in cycles} follows from the fact that each odd cycle has no kernel.
For the sufficiency,
it is equivalent to show that
(i) every arc-colored odd cycle with at least two colors has a PCP-kernel
and (ii) every arc-colored even cycle has a PCP-kernel.
We prove the result by constructing such a kernel $S$.

Let $C=(v_{0},\ldots,v_{n-1},v_{0})$ be an arc-colored cycle
and assume w.l.o.g. that the vertices are located in a clockwise direction.
If $C$ is an monochromatic even cycle,
then we can let $S=\{v_{0},v_{2},\ldots,v_{n-2}\}$.
Now assume that $C$ is an arc-colored cycle with at least two colors.
If the coloring is proper,
then clearly each vertex forms a PCP-kernel.
Now assume that the coloring is not proper and assume w.l.o.g. that
$P_{1}=(v_{n_{1}},v_{n_{1}+1},\ldots,v_{n'_{1}}=v_{n-1})$ is a monochromatic path of maximum length (which is at least two).
Put $v_{n'_{1}-2},v_{n'_{1}-4},\ldots,v_{n'_{1}-2t_{1}}$ into $S$,
where $t_{1}$ is the largest integer such that $n'_{1}-2t_{1}\geq n_{1}$.
Here, note that since $C$ is neither a monochromatic odd cycle nor a PC cycle,
we have $n_{1}\in \{0,1,\ldots,n-3\}$ and $n'_{1}-2t_{1}=n_{1}$ or $n_{1}+1$.
Afterwards, we consider, in a counter-clockwise direction,
the first appeared maximal monochromatic path of length at least two in $C-P_{1}$,
say $P_{2}=(v_{n_{2}},v_{n_{2}+1},\ldots,v_{n'_{2}})$.
Now put $v_{n'_{2}-2},v_{n'_{2}-4},\ldots,v_{n'_{2}-2t_{2}}$ into $S$,
where $t_{2}$ is the largest integer such that $n'_{2}-2t_{2}\geq n_{2}$.
Continue this procedure until there is no monochromatic path of length at least two
and let $P_{r}=(v_{n_{r}},v_{n_{r}+1},\ldots,v_{n'_{r}})$ be the last appeared maximal monochromatic path of length at least two.
It follows that
$$
S=\bigcup_{i=1}^{r}\{v_{n'_{i}-2},v_{n'_{i}-4},\ldots,v_{n'_{i}-2t_{i}}\},~~\overline{S}=V(C)\backslash S=\overline{S}'\cup \overline{S}'',
$$
where
$$
\overline{S}'=\bigcup_{i=1}^{r}\{v_{n'_{i}-3},v_{n'_{i}-5},\ldots,v_{n'_{i}-2t_{i}+1}\},
$$
$$
\overline{S}''=\bigcup_{i=1}^{r}\{v_{n'_{i}-2t_{i}-1},v_{n'_{i}-2t_{i}-2},\ldots,v_{n'_{i+1}},v_{n'_{i+1}-1}\},
$$
$n_{r+1}=n_{1}$, $n'_{r+1}=n'_{1}=n-1$ and addition is modulo $n$.
It is not difficult to check that no two vertices of $S$ are connected by a PC path in $C$.
For each $1\leq i\leq r$,
one can also verify that each vertex in $\{v_{n'_{i}-3},v_{n'_{i}-5},\ldots,v_{n'_{i}-2t_{i}+1}\}$
can reach some vertex in $\{v_{n'_{i}-2},v_{n'_{i}-4},\ldots,v_{n'_{i}-2t_{i}+2}\}$ by a PC path of length one,
and each vertex in $\{v_{n'_{i}-2t_{i}-1}$,
$v_{n'_{i}-2t_{i}-2},\ldots,v_{n'_{i+1}},v_{n'_{i+1}-1}\}$ can reach $v_{n'_{i}-2t_{i}}$ by a PC path;
in other words,
every vertex outside $S$ can reach $S$ by a PC path in $C$.
Therefore, the set $S$ is a PCP-kernel of $C$.
\end{proof}

\begin{proof}[\bf{Proof of Theorem \ref{thm: pcp-kernels in unicyclic digraphs}}]
Let $D$ be an arc-colored unicyclic digraph with a PC cycle $C$.
Note that the cycle $C$ must be an induced cycle since otherwise two cycles will appear.
Note also that each vertex of $C$ forms a PCP-kernel of $C$.
If $D$ is strongly connected,
then $D$ is a cycle and the desired result follows directly.
Now assume that $D$ is not strongly connected.
Then there exist strongly connected components $D_{1},\ldots,D_{k}$, $k\geq 2$, of $D$
such that there is no arc from $D_{i}$ to $D_{j}$ for any $i>j$.
Let $D_{i}$ be the component containing the cycle $C$.
One can see that $D_{i}=C$.
One can also see that each $D_{j}\neq D_{i}$ is a single vertex, since otherwise another cycle will appear.
We distinguish two cases and show the result by constructing a PCP-kernel $S$.

If $i=k$,
then let $v$ be an arbitrary vertex of $D_{k}=C$ and we put $v$ into $S$.
Let $j_{1}\in \{1,\ldots,k-1\}$ be the largest integer such that there is no PC $(D_{j_{1}},v)$-path.
Put $D_{j_{1}}$ into $S$.
Let $j_{2}\in \{1,\ldots,j_{1}-1\}$ be the largest integer such that there is no PC $(D_{j_{2}},\{v,D_{j_{1}}\})$-path.
Put $D_{j_{2}}$ into $S$.
Continue this procedure until all the remaining vertices in $V(C)\backslash S$ can reach $S$ by a PC path.
Let $D_{j_{r}}$ be the last vertex putting into $S$.
The terminal vertex set $S=\{v,D_{j_{1}},\ldots,D_{j_{r}}\}$ is clearly a PCP-kernel.

If $i\neq k$,
then $D$ contains at least one sink and we put all sinks, say $v_{1},\ldots,v_{p}$, into $S$.
By similar procedure above we can put, step by step, the vertices $D_{j_{1}},\ldots,D_{j_{t}}$ with $j_{t}>i$ into $S$.
Let $U\subseteq V(D_{i})$ be the set of vertices which cannot reach the current $S=\{v_{1},\ldots,v_{p},D_{j_{1}},\ldots,D_{j_{t}}\}$ by a PC path.
If $U\neq \emptyset$,
then put an arbitrary vertex of $U$ (instead of all vertices of $U$) into $S$ and continue the procedure.
If $U=\emptyset$,
then $j_{t+1}<i$ and we can use the same procedure above to get a PCP-kernel $S$.
\end{proof}

\section{Proof of Theorem \ref{thm: PCP kernels in semi-complete digraphs}}

For convenience,
in this proof,
call a vertex $v$ {\em good}
if all other vertices can reach $v$ by a PC path of length at most 3.
One can see that it suffices to consider the tournament case.
Let $T$ be an $m$-colored tournament,
where $m$ is a positive integer.
For $m=1$,
note that each monochromatic tournament with no monochromatic triangles is transitive,
then the unique sink is a good vertex.
So we may assume that $m\geq 2$ and $T$ is an arc-colored tournament with at least two colors.
We prove the result by induction on $|V(T)|$.

Since each arc-colored transitive triangle and each non-monochromatic triangle has a good vertex,
the result holds for $|V(T)|=3$.
Now assume that $T$ is a minimum counterexample with $|V(T)|=k\geq 4$.
It follows that each $m$-colored tournament with no monochromatic triangles and
with order less than $k$ has a good vertex.
So for each vertex $v$ of $T$
the subtournament $T-v$ has a good vertex.
Denote by $v^*$ the good vertex of $T-v$ corresponding to the given coloring of $T$.
Then $v^{*}\rightarrow v$,
since otherwise $v^{*}$ is a good vertex of $T$.
For two distinct vertices $u$ and $v$,
we claim that $u^*\neq v^*$.
If not,
then by the definition of $u^{*}$
there exist a PC $(v,u^{*})$-path in $T-u$ and a PC $(u,u^{*})$-path in $T-v$.
It follows immediately that there exist a PC $(v,u^{*})$-path and a PC $(u,u^{*})$-path in $T$.
Thus, $u^{*}$ is a good vertex of $T$, a contradiction.

Now consider the subdigraph $H$ induced on the arc set $\{v^{*}v:v\in V(T)\}$.
Since each vertex of $H$ has both indegree and outdegree one,
then $H$ consists of vertex-disjoint cycles.
If $H$ has at least two cycles,
then by induction hypothesis the induced subtournament on each cycle has a good vertex,
which is obviously a good vertex of $T$, a contradiction.
So $H$ consists of one cycle.

Let $H=(v_{0},v_{1},\ldots,v_{n-1},v_{0})$.
By the choices of the arcs,
there exists no PC $(v_{i},v_{i-1})$-path of length at most 3 in $T$,
addition is modulo $n$ in this proof.
Consider the three vertices $v_{0},v_{1},v_{2}$,
if $v_{2}\rightarrow v_{0}$,
then since there exists no monochromatic triangle
we have either $(v_{2},v_{0},v_{1})$ is PC $(v_{2},v_{1})$-path of length 2
or $(v_{1},v_{2},v_{0})$ is a PC $(v_{1},v_{0})$-path of length 2, a contradiction.
So $v_{0}\rightarrow v_{2}$.
In fact,
one can see from the simple proof that $v_{i}\rightarrow v_{i+2}$ for any $v_{i}\in V(T)$.

Let $s$ be the minimum integer such that
$v_{s}\rightarrow v_{0}$
and $v_{0}\rightarrow v_{i}$ for any $i\leq s-1$.
Such an integer $s$ exists by the fact that $v_{n-1}\rightarrow v_{0}$.
We may assume that $v_{i}\rightarrow v_{j}$ for any $1\leq i<j\leq s$.
Since there exists no PC $(v_{s},v_{s-1})$-path of length 2,
we have $c(v_{s}v_{0})=c(v_{0}v_{s-1})$, say $c(v_{s}v_{0})=c(v_{0}v_{s-1})=1$.
By assumption, there exists no monochromatic triangle,
we have $c(v_{s-1}v_{s})\neq c(v_{s}v_{0})$.
Since there exists no PC $(v_{s-1},v_{s-2})$-path of length 3,
we have $c(v_{0}v_{s-2})=c(v_{s}v_{0})=1$.
Since $(v_{0},v_{s-2},v_{s},v_{0})$ is not a monochromatic triangle,
we have $c(v_{s-2}v_{s})\neq 1$.
Similarly,
we can show that $c(v_{0}v_{i})=1$ and $c(v_{i}v_{s})\neq 1$ for any $1\leq i\leq s-1$.
This implies that $c(v_{1}v_{s})\neq c(v_{s}v_{0})=1$ and a PC $(v_{1},v_{0})$-path $(v_{1},v_{s},v_{0})$ of length 2 appears, a contradiction.

\section{Proof of Theorem \ref{thm: pcp-kernels in bipartite tournaments}}

\begin{proof}[\bf{Proof of Theorem \ref{thm: pcp-kernels in bipartite tournaments} (i)}]
For the 1-colored case,
by Observation \ref{observation: three kernels},
it suffices to consider the existence of a kernel.
We claim that either $X$ or $Y$ is a kernel.
If $X$ is not a kernel,
then there exists $y\in Y$ such that each vertex of $X$ is an inneighbor of $y$,
implying that $Y$ is a kernel.
So every 1-colored bipartite tournament has a PCP-kernel (not necessary to satisfy the required condition).
In the following we assume $m\geq 2$
and consider PCP-kernels in $m$-colored bipartite tournaments with at least two colors.

We write $u\sim v$ if $u\rightarrow v$ or $v\rightarrow u$.
It is not difficult to verify,
see also in \cite{GR2004285},
that the following lemma holds.
We need to keep in mind of this lemma in the forthcoming proof.

\begin{lemma}[Galeana-S\'{a}nchez and Rojas-Monroy \cite{GR2004285}]\label{lemma: walks in bipartite tournaments}
Let $D$ be an arc-colored bipartite tournament.
Then \\
$(i)$ for each directed walk $(u_{0},u_{1},\ldots,u_{k})$ in $D$ we have $u_{i}\sim u_{j}$ iff $j-i\equiv 1$ (mod 2);\\
$(ii)$ every closed directed walk of length at most 6 is a cycle in $D$.
\end{lemma}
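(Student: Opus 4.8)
The plan is to lean entirely on the one structural fact that a bipartite tournament $D=(X,Y;A)$ is an orientation of a complete bipartite graph: between two vertices in different parts there is exactly one arc, and between two vertices in the same part there is none. In particular, for any two vertices $u,v$ we have $u\sim v$ if and only if $u$ and $v$ lie in different parts.

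For part (i), I would first note that in a directed walk $(u_{0},u_{1},\ldots,u_{k})$ every pair of consecutive vertices is joined by an arc and hence lies in different parts; by an immediate induction on $i$, $u_{i}$ lies in the same part as $u_{0}$ exactly when $i$ is even. Therefore $u_{i}$ and $u_{j}$ lie in different parts if and only if $j-i$ is odd, which by the observation above is equivalent to $u_{i}\sim u_{j}$. I will record the even case as well: when $j-i$ is even, $u_{i}$ and $u_{j}$ lie in the same part, so they are either equal or non-adjacent.

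For part (ii), let $W=(u_{0},u_{1},\ldots,u_{k}=u_{0})$ be a closed directed walk with $k\le 6$. Applying part (i) to the coinciding endpoints forces $k$ to be even, so $k\in\{2,4,6\}$. The key auxiliary observation is that $D$ has no closed directed walk of length $1$, $2$, or $3$: length $1$ is ruled out by the absence of loops; a closed walk $u\to v\to u$ would need two arcs between $u$ and $v$; and in a closed walk $u\to v\to w\to u$, part (i) puts $u$ and $w$ in the same part, contradicting the arc $w\to u$. Now if $W$ were not a cycle, some vertex would repeat, and after rotating $W$ we may assume $u_{0}=u_{b}$ for some $1\le b\le k-1$. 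This splits $W$ into two closed walks of lengths $b$ and $k-b$, both positive and summing to at most $6$, so the shorter has length in $\{1,2,3\}$ --- contradicting the auxiliary observation. Hence $W$ is a cycle.

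I do not expect a genuine obstacle here; this is exactly the kind of statement the authors call ``not difficult to verify''. The only spot deserving a little care is the reduction in part (ii): invoking the rotation symmetry of a closed walk to reduce an arbitrary repeated vertex to a repetition involving $u_{0}$, and then verifying that cutting the walk at that point really does yield a short closed subwalk of length at most $3$.
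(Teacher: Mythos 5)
Your proof is correct, and it is exactly the routine parity-plus-rotation verification the paper has in mind: the paper itself gives no proof of this lemma, citing it from Galeana-S\'{a}nchez and Rojas-Monroy and calling it ``not difficult to verify''. Both parts check out, including the careful reduction in (ii) where a repeated vertex is rotated to the start and the walk is split into two shorter closed walks, the shorter of which has length at most $3$ and is ruled out.
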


For two vertices $u$ and $v$ in $D$,
denote by $dist(u,v)$ the distance from $u$ to $v$.
The following lemma will play a key role in the proof.

\begin{lemma}\label{lemma: any path to a 2-path}
If there exists a PC $(u,v)$-path
but exists no PC $(v,u)$-path in $D$,
then $dist(u,v)\leq 2$.
\end{lemma}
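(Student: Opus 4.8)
The plan is to argue by contradiction. Suppose there is a PC $(u,v)$-path but no PC $(v,u)$-path and yet $dist(u,v)\ge 3$. Fix a \emph{shortest} PC $(u,v)$-path $P=(u_0,u_1,\ldots,u_n)$, where $u_0=u$, $u_n=v$ and $n\ge dist(u,v)\ge 3$. The first step is a parity reduction via Lemma~\ref{lemma: walks in bipartite tournaments}(i): since $u_0,\ldots,u_n$ is a walk, $u_i\sim u_j$ exactly when $i-j$ is odd. If $n$ were odd then $u_0\sim u_n$; the arc $u_0\to u_n$ would give $dist(u,v)\le 1$, while the arc $u_n\to u_0$ is itself a PC $(v,u)$-path --- both impossible. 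Hence $n$ is even and $n\ge 4$. Next, $u_1\sim u_n$ and $u_0\sim u_{n-1}$; if $u_1\to u_n$ then $(u_0,u_1,u_n)$ is a directed $(u,v)$-walk of length $2$, and if $u_0\to u_{n-1}$ then $(u_0,u_{n-1},u_n)$ is one, each contradicting $dist(u,v)\ge 3$. So $u_n\to u_1$ and $u_{n-1}\to u_0$.

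The heart of the argument is to splice pieces of $P$ with these two chords into a PC $(v,u)$-path. Consider the two closed directed walks $W_1=(u_n,u_1,u_2,\ldots,u_{n-1},u_n)$ and $W_2=(u_0,u_1,\ldots,u_{n-1},u_0)$, both of length $n$ (all their arcs are arcs of $P$, except the chords $u_n\to u_1$ and $u_{n-1}\to u_0$). When $n\in\{4,6\}$, Lemma~\ref{lemma: walks in bipartite tournaments}(ii) forces each of $W_1,W_2$ to be a genuine cycle, so by the hypothesis of Theorem~\ref{thm: pcp-kernels in bipartite tournaments}(i) both are properly colored. Now examine the directed $(v,u)$-path $Q=(u_n,u_1,u_2,\ldots,u_{n-1},u_0)$ (its vertices are exactly $u_0,\ldots,u_n$, so $Q$ really is a path). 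Consecutive arcs of $Q$ have distinct colors at every junction inside $(u_1,\ldots,u_{n-1})$ because $P$ is PC; the junction $c(u_nu_1)$ versus $c(u_1u_2)$ is handled because $W_1$ is PC; and the junction $c(u_{n-2}u_{n-1})$ versus $c(u_{n-1}u_0)$ is handled because $W_2$ is PC. Thus $Q$ is a PC $(v,u)$-path, a contradiction. Hence $n\notin\{4,6\}$, so $n\ge 8$.

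It remains to rule out $n\ge 8$, and this is where the real work lies. Here one exploits again that $P$ is a shortest PC $(u,v)$-path by looking at chords that reach far along $P$. For instance $u_0\sim u_5$: if $u_0\to u_5$ then $(u_0,u_5,u_6,\ldots,u_n)$ is a shorter $(u,v)$-walk, so minimality forces $c(u_0u_5)=c(u_5u_6)$; whereas if $u_5\to u_0$ then $(u_0,u_1,\ldots,u_5,u_0)$ is a $6$-cycle by Lemma~\ref{lemma: walks in bipartite tournaments}(ii), hence properly colored, and then combining the resulting PC path $(u_1,u_2,u_3,u_4,u_5,u_0)$ with the chord $u_n\to u_1$ forces $c(u_nu_1)=c(u_1u_2)$ (otherwise a PC $(v,u)$-path appears). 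A symmetric analysis of $u_{n-5}\sim u_n$ yields analogous equalities near the end of $P$. Pushing this bookkeeping, in each branch one obtains either a PC $(u,v)$-path shorter than $P$, or a $4$- or $6$-cycle whose proper coloring contradicts one of the forced equalities, or an explicit PC $(v,u)$-path --- a contradiction in every case, which forces $dist(u,v)\le 2$. The main obstacle is precisely this last step: organizing the case split on the directions of the long chords so that every branch terminates, while keeping track of which spliced closed walks are genuinely $4$- or $6$-cycles (guaranteed by Lemma~\ref{lemma: walks in bipartite tournaments}(ii)) so that the $4$-/$6$-cycle hypothesis of Theorem~\ref{thm: pcp-kernels in bipartite tournaments}(i) can legitimately be invoked.
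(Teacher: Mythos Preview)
Your parity reduction and the treatment of $n\in\{4,6\}$ are correct and in fact tidier than the paper's corresponding case analysis: once $u_n\to u_1$ and $u_{n-1}\to u_0$ are forced, the two short closed walks $W_1,W_2$ are genuine $4$- or $6$-cycles, hence properly colored, and the spliced path $Q=(u_n,u_1,\ldots,u_{n-1},u_0)$ is a PC $(v,u)$-path. That part is fine.

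The genuine gap is the case $n\ge 8$. You explicitly flag this as ``where the real work lies'' and then only gesture at an approach --- looking at $u_0\sim u_5$ and $u_{n-5}\sim u_n$ --- without carrying it through. The color equalities you extract from those two chords (e.g.\ $c(u_0u_5)=c(u_5u_6)$ in one branch, $c(u_nu_1)=c(u_1u_2)$ in another) are not in themselves contradictory, and there is no indication that ``pushing this bookkeeping'' terminates: for large $n$ there are many chords whose orientation is undetermined, and the local constraints you obtain near the two ends of $P$ do not obviously propagate into a global contradiction. As written, the proposal does not prove the lemma for $n\ge 8$.

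The paper supplies exactly the organizing device you are missing. First (its Claim~1) it shows there is no arc from $\{u_0,\ldots,u_{k-3}\}$ into $u_k$, so $u_k$ dominates every earlier vertex of the right parity. Second (its Claim~2) it shows that not all length-$3$ chords can point backward: some $u_i\to u_{i+3}$ must exist. The proof of Claim~2 for $k\ge 10$ is the nontrivial part --- assuming $u_{i+3}\to u_i$ for all $i$, one first pins down $u_k\to u_{k-5}$ and $u_{k-3}\to u_{k-8}$ via $4$- and $6$-cycle arguments, and then writes down an explicit PC $(v,u)$-path that zigzags back along these backward chords. With Claims~1 and~2 in hand, the paper takes the \emph{minimal} $i$ with $u_i\to u_{i+3}$ and the \emph{maximal} $j^*$ with $u_i\to u_{j^*}$, and runs a short, focused case split on the value of $j^*$ (namely $j^*\le k-4$, $j^*=k-3$, $j^*=k-2$, $j^*=k-1$) to produce in each case a shorter PC $(u,v)$-path or a short $(u,v)$-path. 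Your proposal contains no analogue of this $(i,j^*)$ mechanism, and without something like it the $n\ge 8$ case is not settled.
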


\begin{proof}
Let $P=(u_{0},u_{1},\ldots,u_{k})$ be a shortest PC $(u,v)$-path, where $u=u_{0}$ and $v=u_{k}$.
The result holds clearly for $k\leq 2$.
Now let $k\geq 3$
and assume the opposite that each $(u,v)$-path has length at least 3.

\setcounter{claim}{0}
\begin{claim}\label{claim: uk dominates the first k-2 vertices}
There exists no arc from $\{u_{0},u_{1},\ldots,u_{k-3}\}$ to $u_{k}$.
\end{claim}

\begin{proof}
The statement holds directly for $k\leq 4$.
Assume that $k\geq 5$.
Let $i^{*}=\min \{i:u_{i}\rightarrow u_{k},0\leq i\leq k-3\}$.
Then $i^{*}\in \{u_{2},u_{3},\ldots,u_{k-3}\}$ and $u_{k}\rightarrow u_{i^{*}-2}$.
Now $(u_{k},u_{i^{*}-2},u_{i^{*}-1}$,
$u_{i^{*}},u_{k})$ is a 4-cycle and by assumption it is properly colored.
So $c(u_{i^{*}-1}u_{i^{*}})\neq c(u_{i^{*}}u_{k})$
and $u_{0}Pu_{i^{*}}u_{k}$ is a PC $(u,v)$-path of length less than $k$, a contradiction.
\end{proof}

\begin{claim}
There exists $i\in \{0,1,\ldots,k-3\}$ such that $u_{i}\rightarrow u_{i+3}$.
\end{claim}

\begin{proof}
Assume the opposite that $u_{i+3}\rightarrow u_{i}$ for each $i\in \{0,1,\ldots,k-3\}$.
If $k$ is odd,
then $u_{0}\sim u_{k}$ and
either there exists a $(u,v)$-path of length 1
or there exists a PC $(v,u)$-path of length 1, a contradiction.
So $k$ is even.
Recall that $k\geq 3$.
If $k=4$,
then $(u_{0},u_{1},u_{2},u_{3},u_{0})$ and $(u_{1},u_{2},u_{3},u_{4},u_{1})$ are PC 4-cycles.
So $(u_{4},u_{1},u_{2},u_{3},u_{0})$ is a PC $(v,u)$-path, a contradiction.
If $k=6$,
then $u_{5}\rightarrow u_{0}$
since otherwise $(u_{0},u_{5},u_{6})$ is a $(u,v)$-path of length 2.
Now $(u_{3},u_{4},u_{5},u_{6},u_{3})$ is a PC 4-cycle
and $u_{0}Pu_{5}u_{0}$ is a PC 6-cycle.
Thus, $u_{6}u_{3}Pu_{5}u_{0}$ is a PC $(v,u)$-path, a contradiction.
If $k=8$,
then $u_{7}\rightarrow u_{0}$ and $u_{8}\rightarrow u_{1}$
since otherwise either $(u_{0},u_{7},u_{8})$ or $(u_{0},u_{1},u_{8})$ is a $(u,v)$-path of length 2.
Besides, we have $u_{5}\rightarrow u_{0}$
since otherwise $(u_{0},u_{5},u_{6},u_{7},u_{0})$
and $(u_{5},u_{6},u_{7},u_{8},u_{5})$ are PC 4-cycles
and $(u_{8},u_{5},u_{6},u_{7},u_{0})$ is a PC $(v,u)$-path.
We also can show that $u_{8}\rightarrow u_{3}$.
If not,
then $(u_{3},u_{8},u_{1},u_{2},u_{3})$ is a PC 4-cycle
and $(u_{0},u_{1},u_{2},u_{3},u_{8})$ is a PC $(u,v)$-path of length less than $k$, a contradiction.
Then there exist two PC 6-cycles
$(u_{0},u_{1},u_{2},u_{3},u_{4},u_{5},u_{0})$ and $(u_{3},u_{4},u_{5},u_{6},u_{7},u_{8},u_{3})$.
It follows that $u_{8}u_{3}Pu_{5}u_{0}$ is a PC $(v,u)$-path, a contradiction.
So from now on assume that $k\geq 10$.

We claim first that $u_{k}\rightarrow u_{k-5}$.
If not,
then $(u_{k},u_{k-3},u_{k-6},u_{k-5},u_{k})$ is a PC 4-cycle
and thus $u_{0}Pu_{k-5}u_{k}$ is a PC $(u,v)$-path of length less than $k$, a contradiction.
We also claim that $u_{k-3}\rightarrow u_{k-8}$.
If not,
then since $(u_{k-9},u_{k-8},u_{k-3},u_{k-6},u_{k-9})$
and $(u_{k-8},u_{k-3},u_{k-2},u_{k-1},u_{k},u_{k-5},u_{k-8})$ are PC cycles
we have $c(u_{k-9}u_{k-8})\neq c(u_{k-8}u_{k-3})$ and $c(u_{k-8}u_{k-3})\neq c(u_{k-3}u_{k-2})$.
It follows that $u_{0}Pu_{k-8}u_{k-3}Pu_{k}$ is a PC $(u,v)$-path of length less than $k$, a contradiction.

Recall that $u_{i+3}\rightarrow u_{i}$ for each $i\in \{0,1,\ldots,k-3\}$
and all 4-cycles and 6-cycles are properly colored.
Thus,
$$
u_{k}u_{k-5}Pu_{k-3}u_{k-8}u_{k-7}u_{k-10}\cdots u_{k-2i}u_{k-2i-1}u_{k-2i+2}\cdots u_{0}
$$
is a PC $(v,u)$-path,
contradicting the assumption in Lemma \ref{lemma: any path to a 2-path}.
\end{proof}

Let $i$ be the minimum integer in $\{0,1,\ldots,k-3\}$ such that $u_{i}\rightarrow u_{i+3}$
and let $j^{*}=\max \{j:u_{i}\rightarrow u_{j},i+3\leq j\leq k\}$.
By Claim \ref{claim: uk dominates the first k-2 vertices},
we have $j^{*}\neq k$.
If $j^{*}=k-1$,
then $i\neq 0$;
otherwise, $(u_{0},u_{k-1},u_{k})$ is a $(u_{0},u_{k})$-path of length 2.
By Claim \ref{claim: uk dominates the first k-2 vertices},
we also have $u_{k}\rightarrow u_{i-1}$.
Since $(u_{i-1},u_{i},u_{k-1},u_{k},u_{i-1})$ is a PC 4-cycle,
we get that $u_{0}Pu_{i}u_{k-1}u_{k}$ is a PC $(u_{0},u_{k})$-path of length less than $k$, a contradiction.
So we have $j^{*}\leq k-2$.

By the choice of $j^{*}$,
we have $u_{j^{*}+2}\rightarrow u_{i}$ and $(u_{i},u_{j^{*}},u_{j^{*}+1},u_{j^{*}+2},u_{i})$ is a PC 4-cycle.
Hence $c(u_{i}u_{j^{*}})\neq c(u_{j^{*}}u_{j^{*}+1})$.
If $i=0$,
then $u_{0}u_{j^{*}}Pu_{k}$ is a PC $(u_{0},u_{k})$-path of length less than $k$, a contradiction.
So $i\geq 1$.

By the minimality of $i$ we have $u_{i+2}\rightarrow u_{i-1}$.
Since $u_{i}\sim u_{j^{*}}$,
we have $u_{i+2}\sim u_{j^{*}+2}$.
If $u_{j^{*}+2}\rightarrow u_{i+2}$,
then $(u_{j^{*}+2},u_{i+2},u_{i-1},u_{i},u_{j^{*}},u_{j^{*}+1},u_{j^{*}+2})$ is a PC 6-cycle
and $u_{0}Pu_{i}u_{j^{*}}Pu_{k}$ is a PC $(u_{0},u_{k})$-path of length less than $k$, a contradiction.
So $u_{i+2}\rightarrow u_{j^{*}+2}$.

If $j^{*}\leq k-4$,
then by the choice of $j^{*}$ we have $u_{j^{*}+4}\rightarrow u_{i}$.
Now $(u_{i},u_{i+1},u_{i+2},u_{j^{*}+2}$,
$u_{j^{*}+3},u_{j^{*}+4},u_{i})$ is a PC 6-cycle
and $u_{0}Pu_{i+2}u_{j^{*}+2}Pu_{k}$ is a PC $(u_{0},u_{k})$-path of length less than $k$, a contradiction.
So $j^{*}\in \{k-2,k-3\}$.
If $j^{*}=k-2$,
then $(u_{i},u_{i+1},u_{i+2},u_{j^{*}+2},u_{i})$ is a PC 4-cycle
and $u_{0}Pu_{i+2}u_{j^{*}+2}$ is a PC $(u_{0},u_{k})$-path of length less than $k$, a contradiction.
So $j^{*}=k-3$ and $u_{j^{*}+3}=u_{k}$.

Now we claim that $u_{i+1}\rightarrow u_{k}$.
If not,
then $(u_{i+1},u_{i+2},u_{j^{*}+2},u_{k},u_{i+1})$ is a PC 4-cycle
and $u_{0}Pu_{i+2}u_{j^{*}+2}u_{k}$ is a PC $(u_{0},u_{k})$-path of length less than $k$, a contradiction.
We may also claim that $u_{i-1}\rightarrow u_{k}$.
If not,
then $(u_{i-1},u_{i},u_{i+1},u_{k},u_{i-1})$ is a PC 4-cycle
and $u_{0}Pu_{i+1}u_{k}$ is a PC $(u_{0},u_{k})$-path of length less than $k$, a contradiction.
Similarly, we can show that $u_{i-s}\rightarrow u_{k}$ for any odd $s$ with $1\leq s\leq i$.
Clearly, there will be a $(u,v)$-path of length at most 2.

The proof of Lemma \ref{lemma: any path to a 2-path} is complete.
\end{proof}

In view of Theorem \ref{thm: kernels under some constraits for odd cycles} (i),
it suffices to show that every cycle of $\mathscr{C}(D)$ has a symmetrical arc.
Assume the opposite that there exists a cycle $C$ in $\mathscr{C}(D)$ containing no symmetrical arc
and denote it by
$$
C=(u_{0},u_{1},\ldots, u_{l},u_{0}).
$$
We will get a contradiction by showing that $C$ has a symmetrical arc.
Here we distinguish two cases.

\setcounter{case}{0}
\begin{case}
$l=2$.
\end{case}

Since a bipartite tournament contains no odd cycle,
there exists an arc of $C$ in $A(\mathscr{C}(D))\backslash A(D)$, say $u_{0}u_{1}$.
By Lemma \ref{lemma: any path to a 2-path},
there exists a $(u_{0},u_{1})$-path of length 2 in $D$, say $(u_{0},x_{0},u_{1})$.

If $u_{1}u_{2},u_{2}u_{0} \in A(D)$,
then $(u_{0},x_{0},u_{1},u_{2},u_{0})$ is a (properly colored) 4-cycle in $D$ and $u_{1}u_{0}\in \mathscr{C}(D)$.
This implies that $C$ has a symmetrical arc $u_{0}u_{1}$, a contradiction.

If $|\{u_{1}u_{2},u_{2}u_{0}\}\cap A(D)|=1$,
then by Lemma \ref{lemma: any path to a 2-path} and Lemma \ref{lemma: walks in bipartite tournaments} (ii)
there will be a 5-cycle which contradicts the well-known fact that a bipartite tournament contains no odd cycle.

Now let $u_{1}u_{2},u_{2}u_{0} \notin A(D)$.
Then by Lemma \ref{lemma: any path to a 2-path},
there exist a $(u_{1},u_{2})$-path of length 2 and a $(u_{2},u_{0})$-path of length 2 in $D$,
say $(u_{1},x_{1},u_{2})$ and $(u_{2},x_{2},u_{0})$.
By Lemma \ref{lemma: walks in bipartite tournaments} (ii) and our assumption
we get that $(u_{0},x_{0},u_{1},x_{1},u_{2},x_{2},u_{0})$ is a PC 6-cycle.
This implies that each arc in $C$ is a symmetrical arc, a contradiction.

\begin{case}
$l\geq 3$.
\end{case}

In view of Lemma \ref{lemma: any path to a 2-path},
there exists a $(u_{i},u_{i+1})$-path of length at most 2 for each $0\leq i\leq l$ in $D$, where $u_{l+1}=u_{0}$.
Let $P_{i}$ be the shortest $(u_{i},u_{i+1})$-path in $D$
and let $C^{*}=\cup_{i=0}^{l}P_{i}$.
Then $C^{*}$ is a closed directed walk in $D$.
For convenience, denote this closed walk by
$$
C^{*}=(x_{0},x_{1},\ldots, x_{s},x_{0}),
$$
where $x_{0}=u_{0}$ and $s\geq l$.

If $x_{3}x_{0}\in A(D)$,
then $(x_{0},x_{1},x_{2},x_{3},x_{0})$ is a PC 4-cycle
and $x_{1}x_{0},x_{2}x_{0}\in A(\mathscr{C}(D))$.
Note that either $x_{0}x_{1}\in A(C)$ or $x_{0}x_{2}\in A(C)$.
This implies that $C$ has a symmetrical arc, a contradiction.
Similarly, if $x_{0}x_{s-2}\in A(D)$,
then we can show that either $x_{s}x_{0}$ or $x_{s-1}x_{0}$ is a symmetrical arc of $C$, a contradiction.

Now assume that $x_{0}x_{3},x_{s-2}x_{0}\in A(D)$.
Let $i$ be the minimum integer such that $x_{0}x_{i},x_{i+2}x_{0}\in A(D)$.
Then $(x_{0},x_{i},x_{i+1},x_{i+2},x_{0})$ is a PC 4-cycle in $D$
and $x_{i+1}x_{i},x_{i+2}x_{i}\in A(\mathscr{C}(D))$.
If $x_{i}\in V(C)$,
then $\{x_{i}x_{i+1},x_{i}x_{i+2}\}\cap A(C)\neq \emptyset$
and thus either $x_{i+1}x_{i}$ or $x_{i+2}x_{i}$ is a symmetrical arc in $C$, a contradiction.
So $x_{i}\notin V(C)$ and $x_{i-1}x_{i+1}\in A(C)$.
By the choice of $i$,
we have $x_{0}x_{i-2}\in A(D)$.
Then $(u_{0},u_{i-2},u_{i-1},u_{i},u_{i+1},u_{i+2},u_{0})$ is a PC 6-cycle
and there exists a PC $(x_{i+1},x_{i-1})$-path.
So $x_{i-1}x_{i+1}$ is a symmetrical arc in $C$, a contradiction.
\end{proof}

\begin{proof}[\bf{Proof of Theorem \ref{thm: pcp-kernels in bipartite tournaments} (ii)}]
If $\min \{|X|,|Y|\}=1$,
then $D$ has no cycle and the result follows from Proposition \ref{proposition: pcp-kernels in digraphs} (i).
So we can assume w.l.o.g. that $|X|=\min \{|X|,|Y|\}=2$.
By contradiction, suppose the opposite that $D$ has no PCP-kernel.
By Proposition \ref{proposition: pcp-kernels in digraphs} we can assume that $D$ has a cycle.
It is not difficult to check that
if $y\in Y$ is a source then $D$ has a PCP-kernel
if and only if $D-y$ has a PCP-kernel.
So we assume also that $D$ has no source in $Y$.
Let $X=\{x_{1},x_{2}\}$ and let
\begin{center}
$Y_{0}=\{y\in Y:~x_{1}\rightarrow y,~x_{2}\rightarrow y\},$
\end{center}
\begin{center}
$Y_{1}=\{y\in Y\backslash Y_{0}:~there~exists~a~PC~(y,Y_{0})$-path in $D$$\},$
\end{center}
\begin{center}
$Y_{2}=Y\backslash (Y_{0}\cup Y_{1})$.
\end{center}

If $Y_{2}=\emptyset$,
then $Y_{0}$ is a PCP-kernel.
So we assume that $Y_{2}\neq \emptyset$.
Two vertices $v_{1}$ and $v_{2}$ are called {\em contractible}
if for any vertices $u$ and $w$
we have $v_{1}\rightarrow u$ iff $v_{2}\rightarrow u$,
$w\rightarrow v_{1}$ iff $w\rightarrow v_{2}$,
and $c(v_{1}u)=c(v_{2}u),c(wv_{1})=c(wv_{2})$ whenever $v_{1}u,v_{2}u,wv_{1},wv_{2}\in A(D)$.
Recall that all digraphs we consider here are simple,
that is, contain no loops.
So there exists no arc between any two contractible vertices.
We now show the following claim.

\begin{lemma}\label{lemma: no two contractible vertices}
Let $v_{1},v_{2}$ be two contractible vertices in an arc-colored digraph $D'$.
Then $D'$ has a PCP-kernel if and only if $D'-v_{2}$ has a PCP-kernel.
\end{lemma}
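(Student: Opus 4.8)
The plan is to prove both directions directly from the definition of PCP-kernel, using the fact that $v_1$ and $v_2$ are ``indistinguishable'' with respect to PC paths. First I would record the basic consequence of contractibility: for any vertex $w\notin\{v_1,v_2\}$ and any $i\in\{1,2\}$, there is a PC $(w,v_i)$-path in $D'$ if and only if there is a PC $(w,v_j)$-path (with $\{i,j\}=\{1,2\}$), and symmetrically there is a PC $(v_i,w)$-path iff there is a PC $(v_j,w)$-path; moreover every PC path through $v_i$ can be rerouted through $v_j$ since their in- and out-neighborhoods, together with the colors on the incident arcs, coincide, and since there is no arc between $v_1$ and $v_2$ a PC path cannot use both of them consecutively. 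One subtlety to note explicitly is that $v_1$ and $v_2$ are themselves \emph{not} joined by a PC path (any such path would start at $v_1$, leave along an arc $v_1u$ and, by contractibility, $v_2u\in A(D')$ too, so the path ending at $v_2$ could be closed up — but more simply, no PC $(v_1,v_2)$-path and no PC $(v_2,v_1)$-path can exist, which I would verify by a short rerouting argument or simply observe is not needed for what follows).

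For the ``only if'' direction, suppose $S$ is a PCP-kernel of $D'$. If $v_1,v_2\notin S$, then $S$ is still a subset of $V(D'-v_2)$ and I claim it is a PCP-kernel there: condition (i) is inherited, and for condition (ii) any vertex $w\neq v_2$ outside $S$ reaches $S$ by a PC path in $D'$, which I reroute around $v_2$ (replacing $v_2$ by $v_1$ wherever it occurs) to get a PC path in $D'-v_2$. If $v_2\in S$, then by condition (i) no PC path joins $v_2$ to any other vertex of $S$; since $v_1$ has the same PC-reachability relations as $v_2$, no PC path joins $v_1$ to any vertex of $S\setminus\{v_2\}$ either, and in particular $v_1\notin S$. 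I then set $S' = (S\setminus\{v_2\})\cup\{v_1\}$ and check it is a PCP-kernel of $D'-v_2$; the rerouting trick handles both conditions. The remaining case $v_1\in S$, $v_2\notin S$ is the easiest: $S$ itself already lives in $D'-v_2$, condition (i) is inherited, and condition (ii) follows after rerouting any PC path to $S$ around $v_2$.

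For the ``if'' direction, suppose $S$ is a PCP-kernel of $D'-v_2$. If $v_1\notin S$, then $v_1$ reaches $S$ by a PC path $Q$ in $D'-v_2$ (hence in $D'$), and I claim $S$ is a PCP-kernel of $D'$: the only thing to check is that $v_2$ reaches $S$ by a PC path in $D'$, and for this I translate $Q$ into a PC $(v_2,S)$-path by replacing its initial vertex $v_1$ by $v_2$ (legitimate since the second vertex of $Q$ is an out-neighbor of $v_1$, hence of $v_2$, with the same arc color); also no PC path joins $v_2$ to a vertex of $S$ would violate (i), but that follows because no PC path joins $v_1$ to a vertex of $S$. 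If instead $v_1\in S$, I claim $S\cup\{v_2\}$ is a PCP-kernel of $D'$: for condition (ii), every $w\notin S\cup\{v_2\}$ reached $S$ in $D'-v_2$ and still does in $D'$; for condition (i), I must rule out a PC path between $v_2$ and some $u\in S$ (and between $v_2$ and $v_1$) — a PC $(v_2,u)$-path would reroute to a PC $(v_1,u)$-path, contradicting that $S$ was a PCP-kernel in $D'-v_2$ (as $v_1,u\in S$); a PC $(u,v_2)$-path reroutes to a PC $(u,v_1)$-path, same contradiction; and a PC path between $v_1$ and $v_2$ would similarly produce a PC path from $v_1$ to itself via rerouting, which is absurd, or can be excluded at the outset.

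The main obstacle is making the ``rerouting around $v_i$'' operation airtight: one must argue that replacing an occurrence of $v_i$ in a PC path (or at its endpoint) by $v_j$ yields a directed path that is still \emph{properly colored} and still \emph{a path} (no repeated vertex). Properly colored is fine because contractibility preserves the colors of the two arcs incident to the swapped vertex, and these are the only arcs whose endpoints change; that $v_i$ occurs at most once in a path is automatic, and that the swap does not collide with another vertex of the path is where one uses that there is no arc between $v_1$ and $v_2$ (so $v_j$ was not adjacent along the path) — actually one should simply observe $v_j$ cannot already lie on a path through $v_i$ because the neighbors of $v_i$ on that path are out/in-neighbors of $v_i$, hence of $v_j$, and an arc $v_j v_i$ or $v_i v_j$ is forbidden. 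I would state this rerouting lemma once as a short sub-claim and invoke it repeatedly.
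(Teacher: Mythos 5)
Your case analysis is essentially the paper's, and your ``only if'' direction (including the rerouting sub-claim) is sound. But there is a genuine gap in the ``if'' direction, in the case $v_{1}\in S$: you take $S\cup\{v_{2}\}$ and dismiss the possibility of a PC path between $v_{1}$ and $v_{2}$, claiming it ``can be excluded at the outset'' or would yield an absurd ``PC path from $v_{1}$ to itself''. That exclusion is false: contractible vertices \emph{can} be joined by a PC path. For instance, let $V(D')=\{v_{1},v_{2},a,b\}$ with arcs $v_{1}a,v_{2}a$ of color $1$, $ab$ of color $2$, and $bv_{1},bv_{2}$ of color $3$; then $v_{1},v_{2}$ are contractible, yet $(v_{1},a,b,v_{2})$ and $(v_{2},a,b,v_{1})$ are PC paths. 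Your rerouting argument cannot rescue the claim, because replacing the endpoint $v_{2}$ of a PC $(v_{2},v_{1})$-path (or the endpoint $v_{1}$ of a PC $(v_{1},v_{2})$-path) by the other contractible vertex produces a closed walk, not a path, so no contradiction arises; rerouting only rules out PC paths between $v_{2}$ and the vertices of $S\setminus\{v_{1}\}$. Concretely, in the example above $S=\{v_{1}\}$ is a PCP-kernel of $D'-v_{2}$, but your proposed set $\{v_{1},v_{2}\}$ violates condition (i) in $D'$, so the construction genuinely fails (even though $D'$ does have a PCP-kernel, namely $\{v_{1}\}$).

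The repair is exactly the extra split the paper makes in this case: if $v_{1}\in S$ and there exists a PC $(v_{2},v_{1})$-path in $D'$, then $S$ itself is already a PCP-kernel of $D'$ (condition (ii) for $v_{2}$ is witnessed by that path, and condition (i) is checked by truncation and rerouting just as you do elsewhere); only when no PC path joins $v_{2}$ and $v_{1}$ --- and note that by contractibility a PC $(v_{1},v_{2})$-path exists if and only if a PC $(v_{2},v_{1})$-path exists, since the whole path, not just one endpoint, can be re-attached --- do you add $v_{2}$ and take $S\cup\{v_{2}\}$. With this correction your argument coincides with the paper's. A minor slip that does not affect correctness: in the necessity case $v_{2}\in S$, the inference ``in particular $v_{1}\notin S$'' does not follow from what precedes it, but your set $(S\setminus\{v_{2}\})\cup\{v_{1}\}$ works whether or not $v_{1}\in S$, so nothing is lost there.
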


\begin{proof}
For the necessity,
let $S$ be a PCP-kernel of $D'$.
If $\{v_{1},v_{2}\}\subseteq S$,
then by the definition of contractible vertices $S\backslash v_{2}$ is a PCP-kernel of $D'-v_{2}$.
If $v_{2}\in S$ and $v_{1}\notin S$,
then $S\cup \{v_{1}\}$ is a PCP-kernel of $D'-v_{2}$.
If $\{v_{1},v_{2}\}\cap S=\emptyset$,
then $S$ is also a PCP-kernel of $D'-v_{2}$.
For the sufficiency,
let $S'$ be a PCP-kernel of $D'-v_{2}$.
If $v_{1}\notin S'$,
then $S'$ is a PCP-kernel of $D'$.
Now assume that $v_{1}\in S'$.
If there exists a PC $(v_{2},v_{1})$-path,
then $S'$ is a PCP-kernel of $D'$.
Otherwise,
there exists no PC $(v_{1},v_{2})$-path
and $S'\cup \{v_{2}\}$ is a PCP-kernel of $D'$.
\end{proof}

Now we assume that $D$ does not contain two contractible vertices and distinguish two cases in the following.

\setcounter{case}{0}
\begin{case}
$Y_{0}\neq \emptyset$.
\end{case}

Since $D$ has no source in $Y$,
each vertex in $Y\backslash Y_{0}$ has one outneighbor and one inneighbor in $\{x_{1},x_{2}\}$.
For a vertex $x\in X$ which has at least one inneighbor in $Y_{2}$,
since there exists no PC path from $Y_{2}$ to $Y_{0}$,
we have $c(xy'_{0})=c(xy''_{0})$ for any $y'_{0},y''_{0}\in Y_{0}$;
otherwise,
for each $y\in Y_{2}$ with $y\rightarrow x$
there exists $y_{0}\in Y_{0}$ with $c(yx)\neq c(xy_{0})$,
which yields a PC path $(y,x,y_{0})$ from $Y_{2}$ to $Y_{0}$, a contradiction.
For convenience,
denote by $c(xY_{0})$ the common color assigned for the arcs from $x$ to $Y_{0}$.
By the definition of $Y_{2}$,
the following claim holds.

\setcounter{claim}{0}
\begin{claim}\label{claim: two vertices in Y2}
For two vertices $y',y''\in Y_{2}$,
if $\{y',y''\}\rightarrow x$ for some $x\in X$,
then $c(y'x)=c(y''x)=c(xY_{0})$.
\end{claim}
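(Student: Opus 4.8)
The plan is to read the conclusion straight off the definition of $Y_{2}$, using the observation established in the paragraph just preceding the claim that the notation $c(xY_{0})$ makes sense. First I would record two easy consequences of the hypotheses. The hypothesis $\{y',y''\}\rightarrow x$ forces $x$ to have an inneighbour lying in $Y_{2}$, so by the discussion above all arcs from $x$ to $Y_{0}$ share one colour $c(xY_{0})$; and since we are in Case 1 we have $Y_{0}\neq\emptyset$, so we may fix an arbitrary $y_{0}\in Y_{0}$.

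The key step is then to look at the directed path $(y',x,y_{0})$. It is a genuine path of length $2$: its three vertices are pairwise distinct because $x\in X$ while $y'\in Y_{2}$ and $y_{0}\in Y_{0}$ lie in the disjoint sets $Y_{2}$ and $Y_{0}$. Its two arcs are $y'x$ (present by hypothesis) and $xy_{0}$ (present by the definition of $Y_{0}$), so it is a $(y',Y_{0})$-path. Since $y'\in Y_{2}$, the definition of $Y_{2}$ says there is no PC $(y',Y_{0})$-path in $D$; in particular $(y',x,y_{0})$ is not properly coloured, and for a path with exactly two arcs this means precisely $c(y'x)=c(xy_{0})$. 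As $y_{0}\in Y_{0}$ was arbitrary and all such arcs have colour $c(xY_{0})$, we get $c(y'x)=c(xY_{0})$. Repeating the identical argument with $y''$ in place of $y'$ yields $c(y''x)=c(xY_{0})$, and combining the two gives $c(y'x)=c(y''x)=c(xY_{0})$, as required.

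I do not expect a real obstacle here; the claim is immediate once one has the definition of $Y_{2}$ in hand. The only point deserving care — and the reason the claim is stated separately rather than inlined — is the legitimacy of writing $c(xY_{0})$ for a single colour, and that is exactly what is verified in the sentence before the claim: if two arcs $xy_{0}'$ and $xy_{0}''$ out of $x$ received different colours, then any $y\in Y_{2}$ with $y\rightarrow x$ would have $c(yx)$ differ from one of them, producing a PC path $(y,x,y_{0})$ from $Y_{2}$ into $Y_{0}$ and contradicting the definition of $Y_{2}$.
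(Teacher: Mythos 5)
Your proof is correct and follows exactly the route the paper intends: the paper dismisses the claim with ``By the definition of $Y_{2}$, the following claim holds,'' and your argument---that $(y',x,y_{0})$ would be a PC $(y',Y_{0})$-path unless $c(y'x)=c(xy_{0})=c(xY_{0})$, and likewise for $y''$---is precisely the omitted verification, together with the same justification of the notation $c(xY_{0})$ given in the sentence preceding the claim.
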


Let $S'$ be a maximal subset of $Y_{2}$ such that
no two vertices of $S'$ are connected by a PC path.
If $S'=Y_{2}$,
then $Y_{0}\cup S'$ is a PCP-kernel.
Assume that $S'\neq Y_{2}$.
Let
\begin{center}
$R=\{y\in Y_{2}\backslash S'$: there~exists~no~PC~$(y,S')$-path~in~$D$$\}$.
\end{center}
If $R=\emptyset$,
then $Y_{0}\cup S'$ is a PCP-kernel.
So assume that $R\neq \emptyset$.
Let $r$ be an arbitrary vertex in $R$.
Then by the definitions of $S'$ and $R$,
there exists a PC $(s',r)$-path for some $s'\in S'$.

\begin{claim}
Every PC $(s',r)$-path has length 2.
\end{claim}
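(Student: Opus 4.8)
The plan is to bound the length of a PC $(s',r)$-path and then rule out the only case above $2$. Since $s',r\in Y_2\subseteq Y$, an $(s',r)$-path alternates between $Y$ and $X$, so it has even length; and being simple it uses each of the two vertices of $X$ at most once, so its length is at most $4$. Hence every PC $(s',r)$-path has length $2$ or $4$, and it suffices to exclude length $4$. Suppose for contradiction that $P=(s',x_1,y,x_2,r)$ is a PC $(s',r)$-path of length $4$, with $y\in Y$ (relabel $X$ if needed). Since $s',r\in Y\setminus Y_0$ and $D$ has no source in $Y$, each of $s'$ and $r$ has exactly one inneighbor and one outneighbor in $X$; reading these off $P$ forces $r\to x_1$ and $x_2\to s'$. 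Note $x_1\ne x_2$, so in this configuration no $(s',r)$-path of length $2$ exists, which is consistent with $P$ having length $4$.

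I would then pin down the colors along $P$ using $Y_0\ne\emptyset$ and the fact that $s',r\in Y_2$ admit no PC path to $Y_0$. Fix $y_0\in Y_0$, so $x_1\to y_0$ and $x_2\to y_0$. The paths $(s',x_1,y_0)$ and $(r,x_1,y_0)$ are not properly colored (else $s'$ or $r$ would reach $Y_0$ by a PC path), so $c(s'x_1)=c(x_1y_0)=c(rx_1)$; and $(s',x_1,y,x_2,y_0)$ is not properly colored although its initial segment $(s',x_1,y,x_2)$ is, so $c(yx_2)=c(x_2y_0)$. Since $c(rx_1)=c(s'x_1)\ne c(x_1y)$ and $c(x_1y)\ne c(yx_2)$, the path $(r,x_1,y,x_2)$ is properly colored; since also $c(yx_2)\ne c(x_2r)$, the path $(y,x_2,r)$ is properly colored. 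If $c(x_2s')\ne c(yx_2)$, then $(r,x_1,y,x_2,s')$ is a PC $(r,s')$-path, so $r$ reaches $s'\in S'$ by a PC path, contradicting $r\in R$. This leaves the single sub-case $c(x_2s')=c(yx_2)\,(=c(x_2y_0))$.

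This last sub-case is where the work lies, and I expect to need the standing hypothesis that $D$ has no PCP-kernel. Here $y\notin S'$ (else $r$ reaches $S'$ through $(r,x_1,y)$) and $y\notin Y_0$ (as $y\to x_2$), and the collapsed colors make $x_1,x_2$ behave almost interchangeably on the relevant arcs. The aim is to reach a contradiction either by exhibiting a PC path from $r$ to $S'$, or by exhibiting a PCP-kernel of $D$ — the natural candidates being $Y_0\cup\widehat S$, where $\widehat S$ is a maximal PC-independent subset of $Y_2$ obtained from $S'$ by swapping $s'$ for $r$ (using that $r$, lying in $R$, is PC-independent from the rest of $S'$), or a set obtained by splicing $(r,x_1,y)$ with a PC path that the maximality of $S'$ forces out of $y$ toward $S'$. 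The main obstacle is the combinatorial bookkeeping in this sub-case: checking that such a swap still yields a PC-independent set, that the spliced PC walks through $y$ are genuine simple paths (the PC paths leaving $y$ must begin $y,x_2,\dots$, and one must control whether they can re-enter $x_1$, $r$ or $s'$), and that the resulting set satisfies condition (ii) of a PCP-kernel. I expect all of this to come down to a short case analysis on the arcs between $X$ and the remaining vertices of $Y$ together with their colors.
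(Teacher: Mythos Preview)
Your reduction to lengths $2$ or $4$ and the color identities $c(s'x_1)=c(rx_1)=c(x_1Y_0)$ and $c(yx_2)=c(x_2Y_0)$ are correct. The gap is the final sub-case $c(x_2s')=c(yx_2)$: you leave it unresolved, and the strategies you sketch there do not obviously succeed. Swapping $s'$ for $r$ in $S'$ need not yield a PC-independent set, since nothing prevents a PC $(s'',r)$-path for some other $s''\in S'$; and splicing a PC path out of $y$ (which must start $y\to x_2$) back into $S'$ cannot be guaranteed in this configuration. More to the point, a PC $(r,S')$-path may simply fail to exist here, so aiming for that particular contradiction is the wrong target.

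The paper's argument avoids the split altogether by a different and shorter contradiction: once a length-$4$ PC path $(s',x_1,y,x_2,r)$ exists, it shows that $Y_0\cup\{r\}$ is itself a PCP-kernel. Since every $z\in Y_2$ with $z\to x_i$ satisfies $c(zx_i)=c(x_iY_0)$ (otherwise $(z,x_i,y_0)$ would be a PC path into $Y_0$), one has for any $z\in Y_2\setminus\{r\}$: if $z\to x_1$ then $c(zx_1)=c(s'x_1)$ and $(z,x_1,y,x_2,r)$ is a PC $(z,r)$-path; if $z\to x_2$ then $c(zx_2)=c(yx_2)$ and $(z,x_2,r)$ is a PC $(z,r)$-path. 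Thus every vertex of $Y_2$ reaches $r$, every vertex of $Y_1\cup X$ reaches $Y_0$, and $Y_0\cup\{r\}$ is PC-independent because $r\in Y_2$ and the vertices of $Y_0$ are sinks. This contradicts the standing assumption that $D$ has no PCP-kernel, with no residual sub-case to handle.
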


\begin{proof}
By contradiction,
assume w.l.o.g. that there exists a PC $(s',r)$-path of length 4,
say $(s',x_{1},y,x_{2},r)$, where $y\in Y\backslash Y_{0}$.
Since $s'\notin Y_{1}$,
we have $c(s'x_{1})=c(x_{1}Y_{0})$ and $c(yx_{2})=c(x_{2}Y_{0})$.
We show that there exists a PC $(z,r)$-path for any $z\in Y_{2}-\{s',r\}$.
If $z=y$,
then $(z,x_{2},r)$ is a desired path.
Now let $z\neq y$.
Since $z\notin Y_{0}$,
we have either $z\rightarrow x_{1}$ or $z\rightarrow x_{2}$.
If $z\rightarrow x_{1}$,
then since $z\in Y_{2}$ we have $c(zx_{1})=c(x_{1}Y_{0})=c(s'x_{1})$
and $(z,x_{1},y,x_{2},r)$ is a desired path.
If $z\rightarrow x_{2}$,
then similarly $c(zx_{2})=c(x_{2}Y_{0})=c(yx_{2})$
and $(z,x_{2},r)$ is a desired path.
It follows that $Y_{0}\cup \{r\}$ is a PCP-kernel, a contradiction.
\end{proof}

Now we can assume w.l.o.g. that
$(s',x_{1},r)$ is a PC $(s',r)$-path.
Remark that
$c(s'x_{1})\neq c(x_{1}r)$
and, by Claim \ref{claim: two vertices in Y2},
each vertex $y\in Y_{2}$ with $y\rightarrow x_{1}$ can reach $r$ by a PC path $(y,x_{1},r)$.
Let
\begin{center}
$Q=\{y\in Y_{2}\backslash r:~x_{1}\rightarrow y\}.$
\end{center}
If $Q=\emptyset$,
then $Y_{0}\cup \{r\}$ is a PCP-kernel, a contradiction.
So assume that $Q\neq \emptyset$.

\begin{claim}\label{claim: no pc path from r to Q}
There exists no PC $(r,Q)$-path.
\end{claim}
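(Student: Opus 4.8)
The plan is to argue by contradiction: assuming there is a PC $(r,q)$-path for some $q\in Q$, I will exhibit a PCP-kernel of $D$, contradicting our standing assumption that $D$ has none. The first step is to read off the local structure at $r$ and $q$. Since $(s',x_{1},r)$ is a path we have $x_{1}\rightarrow r$, and since $r\in Y_{2}\subseteq Y\setminus Y_{0}$ this forces $r\rightarrow x_{2}$ (otherwise both $x_{1},x_{2}$ would dominate $r$, putting $r$ in $Y_{0}$); likewise $q\in Q$ gives $x_{1}\rightarrow q$ and hence $q\rightarrow x_{2}$. Consequently every PC $(r,q)$-path begins with the arc $rx_{2}$, ends with the arc $x_{1}q$, has even length, and --- since a length-$2$ path $(r,x_{2},q)$ would need $x_{2}\rightarrow q$ --- has length at least $4$. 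Fix such a path $P$ and let $y$ be the vertex of $Y$ immediately after $x_{2}$ on $P$, so $x_{2}\rightarrow y\rightarrow x_{1}$ with $c(x_{2}y)\neq c(yx_{1})$.

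Next I would pin down the colors involved. Since $y\rightarrow x_{1}$ we have $y\notin Y_{0}$, and if $y\in Y_{1}$ then splicing the PC prefix $(r,x_{2},y)$ onto a PC $(y,Y_{0})$-path (which necessarily starts $y\rightarrow x_{1}$, so the splice is properly colored because $c(x_{2}y)\neq c(yx_{1})$) would yield a PC $(r,Y_{0})$-path, contradicting $r\in Y_{2}$; hence $y\in Y_{2}$, and by Claim \ref{claim: two vertices in Y2}, $c(yx_{1})=c(x_{1}Y_{0})$. The same reasoning applied to the $Y$-vertex $w$ lying just before the terminal $x_{1}$ of $P$ gives $w\in Y_{2}$ and $c(wx_{1})=c(x_{1}Y_{0})$; since $c(wx_{1})\neq c(x_{1}q)$ along $P$, we get $c(x_{1}q)\neq c(x_{1}Y_{0})$. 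Finally, Claim \ref{claim: two vertices in Y2} gives $c(rx_{2})=c(x_{2}Y_{0})$, which together with $c(rx_{2})\neq c(x_{2}y)$ yields $c(x_{2}y)\neq c(x_{2}Y_{0})$. These three color facts are exactly what the last step needs.

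Finally I would check that $S:=Y_{0}\cup\{q\}$ is a PCP-kernel of $D$. Independence is immediate: each vertex of $Y_{0}$ is dominated by both $x_{1}$ and $x_{2}$, hence is a sink, so no PC path leaves $Y_{0}$; and there is no PC $(q,Y_{0})$-path since $q\in Y_{2}$. For absorption: $x_{2}$ dominates $Y_{0}$, $x_{1}\rightarrow q$, and every vertex of $Y_{1}$ reaches $Y_{0}$ by definition, so it remains to show each $z\in Y_{2}\setminus\{q\}$ reaches $q$ by a PC path. If $z\rightarrow x_{1}$, then $c(zx_{1})=c(x_{1}Y_{0})\neq c(x_{1}q)$ by Claim \ref{claim: two vertices in Y2}, so $(z,x_{1},q)$ is a PC path; if instead $x_{1}\rightarrow z$, then $z\rightarrow x_{2}$ (as $z\notin Y_{0}$), the four arcs of $(z,x_{2},y,x_{1},q)$ satisfy $c(zx_{2})=c(x_{2}Y_{0})\neq c(x_{2}y)$, $c(x_{2}y)\neq c(yx_{1})=c(x_{1}Y_{0})$, and $c(yx_{1})\neq c(x_{1}q)$, and its vertices are distinct (note $z\neq y$, since otherwise $z\rightarrow x_{2}$ and $x_{2}\rightarrow z$ would both hold), so $(z,x_{2},y,x_{1},q)$ is a PC path. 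Thus $S$ is a PCP-kernel, the desired contradiction, and the claim follows.

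I expect the main obstacle to be picking the right target set: one is tempted to try $Y_{0}\cup\{r\}$ (the vertex already under discussion), but the vertices of $Q$ need not reach $r$; the point is that the mere existence of a PC $(r,q)$-path supplies a vertex $y\in Y_{2}$ through which every vertex of $Y_{2}\setminus\{q\}$ can be routed to $q$, so $Y_{0}\cup\{q\}$ is the set that works. The only remaining care is purely in verifying, from the definitions of $Y_{1},Y_{2}$ and the proper coloring of $P$, that the auxiliary vertices $y$ and $w$ read off $P$ lie in $Y_{2}$; everything else is a direct application of Claim \ref{claim: two vertices in Y2}.
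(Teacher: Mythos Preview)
Your approach is the same as the paper's: assume a PC $(r,Q)$-path exists and show that $Y_{0}\cup\{q\}$ is a PCP-kernel. The independence and absorption checks are carried out correctly, and the three colour facts you isolate ($c(yx_{1})=c(x_{1}Y_{0})$, $c(x_{1}q)\neq c(x_{1}Y_{0})$, $c(x_{2}y)\neq c(x_{2}Y_{0})$) are exactly what is needed. One side remark: since $|X|=2$, any simple directed path in $D$ visits each of $x_{1},x_{2}$ at most once, so $P$ is forced to be $(r,x_{2},y,x_{1},q)$ of length exactly $4$; your auxiliary vertex $w$ is therefore just $y$, and the detour through $w$ is harmless but unnecessary.

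There is, however, a genuine gap in how you obtain $c(yx_{1})=c(x_{1}Y_{0})$. You argue that $y\in Y_{2}$ by splicing $(r,x_{2},y)$ onto a PC $(y,Y_{0})$-path, but a PC $(y,Y_{0})$-path could have length $4$, namely $(y,x_{1},y',x_{2},y_{0})$, and then the splice $(r,x_{2},y,x_{1},y',x_{2},y_{0})$ repeats $x_{2}$ and is not a path; a PC walk from $r$ to $Y_{0}$ does not by itself contradict $r\in Y_{2}$. So the deduction ``$y\in Y_{2}$'' is not justified as written. The paper sidesteps this entirely: it never claims $y\in Y_{2}$, but argues directly that if $c(yx_{1})\neq c(x_{1}Y_{0})$ then there exists $y_{0}\in Y_{0}$ with $c(yx_{1})\neq c(x_{1}y_{0})$, and then $(r,x_{2},y,x_{1},y_{0})$ is a genuine PC path (five distinct vertices), contradicting $r\in Y_{2}$. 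This one-line argument gives exactly the colour equality you need, and the rest of your proof then goes through unchanged.
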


\begin{proof}
Assume the opposite that there exists a PC $(r,Q)$-path,
say $(r,x_{2},y,x_{1},q)$,
for some $q\in Q$.
Then $c(yx_{1})=c(x_{1}Y_{0})$
since otherwise $(r,x_{2},y,x_{1},y_{0})$
is a PC $(r,y_{0})$-path for each $y_{0}\in Y_{0}$,
contradicting that $r\in Y_{2}$.
Now we show that $Y_{0}\cup \{q\}$ is a PCP-kernel.
Since $Q\cup \{r\}\subseteq Y_{2}$,
we have $c(qx_{2})=c(rx_{2})=c(x_{2}Y_{0})$ for each $q\in Q$.
So $(q',x_{2},y,x_{1},q)$ is a PC $(q',q)$ path for each $q'\in Q\backslash q$.
For each $y'\in Y_{2}\backslash Q$,
note that $y'\rightarrow x_{1}$,
since $y'\in Y_{2}$,
we have $c(y'x_{1})=c(x_{1}Y_{0})=c(yx_{1})$.
Then $(y',x_{1},q)$ is a PC $(y',q)$-path.
It therefore follows that $Y_{0}\cup \{q\}$ is a PCP-kernel.
\end{proof}

\begin{claim}\label{claim: no PC path connecting 2 vertices of Q}
There exists no PC path connecting two vertices of $Q$.
\end{claim}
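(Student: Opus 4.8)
\noindent\emph{Proof plan.}
The plan is to argue by contradiction and reduce a hypothetical properly colored path inside $Q$ to a properly colored $(r,Q)$-path, which Claim~\ref{claim: no pc path from r to Q} forbids. So suppose some PC path connects two vertices of $Q$; after relabelling we may take a PC $(q_{1},q_{2})$-path $P$ with $q_{1},q_{2}\in Q$ and $q_{1}\neq q_{2}$. First I would record the local structure at $Q$ and at $r$. For every $q\in Q$ we have $x_{1}\rightarrow q$ by definition of $Q$, and since $q\in Y_{2}$ is not in $Y_{0}$ it cannot be dominated by both $x_{1}$ and $x_{2}$, so $q\rightarrow x_{2}$; thus $x_{1}\rightarrow q\rightarrow x_{2}$ for each $q\in Q$. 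The same argument, using $x_{1}\rightarrow r$ (from the path $(s',x_{1},r)$) and $r\in Y_{2}$, gives $r\rightarrow x_{2}$. As $q_{1}$ and $r$ both lie in $Y_{2}$ and both dominate $x_{2}$, Claim~\ref{claim: two vertices in Y2} yields $c(q_{1}x_{2})=c(rx_{2})$; this colour identity is exactly what lets me splice $r$ onto the front of $P$.

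Next I would pin down the shape of $P$. Since $P$ is a path in a bipartite tournament with both endpoints in $Y$, it alternates between $Y$ and $X$, and as $|X|=2$ it uses at most two vertices of $X$, so its length is at most $4$. Its length is even and positive, and length $2$ is impossible: in $(q_{1},z,q_{2})$ the arc $q_{1}\rightarrow z$ forces $z=x_{2}$ (because $x_{1}\rightarrow q_{1}$), and then $z\rightarrow q_{2}$ together with $x_{1}\rightarrow q_{2}$ would put $q_{2}\in Y_{0}$, contradicting $q_{2}\in Y_{2}$. Hence $P$ has length exactly $4$, say $P=(q_{1},z_{1},w,z_{2},q_{2})$ with $z_{1},z_{2}\in X$ and $w\in Y$; now $q_{1}\rightarrow z_{1}$ forces $z_{1}=x_{2}$, while $z_{2}\rightarrow q_{2}$ together with $q_{2}\rightarrow x_{2}$ forces $z_{2}=x_{1}$. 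So $P=(q_{1},x_{2},w,x_{1},q_{2})$.

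Finally I would perform the splice. Consider $W=(r,x_{2},w,x_{1},q_{2})$. All four arcs are present ($r\rightarrow x_{2}$ was established and the remaining three are arcs of $P$), and the five vertices are pairwise distinct: $r\neq q_{2}$ since $q_{2}\in Q\subseteq Y_{2}\backslash r$; $r\neq w$ and $w\neq q_{2}$ because a bipartite tournament has no $2$-cycle (using $r\rightarrow x_{2}\rightarrow w$ and $w\rightarrow x_{1}\rightarrow q_{2}$); and $x_{1},x_{2}\notin Y$. Moreover $W$ is properly colored: $c(rx_{2})=c(q_{1}x_{2})\neq c(x_{2}w)$ because $P$ is PC, while $c(x_{2}w)\neq c(wx_{1})$ and $c(wx_{1})\neq c(x_{1}q_{2})$ are inherited from $P$. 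Thus $W$ is a PC $(r,q_{2})$-path with $q_{2}\in Q$, contradicting Claim~\ref{claim: no pc path from r to Q}, and the claim follows.

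I do not expect a genuine obstacle here. The step requiring the most care is the length analysis of the previous paragraph, where $|X|=2$ pins $P$ down to the single shape $(q_{1},x_{2},w,x_{1},q_{2})$; once this is in hand, the contradiction is immediate via the colour identity $c(q_{1}x_{2})=c(rx_{2})$ supplied by Claim~\ref{claim: two vertices in Y2}. The only routine point to watch is checking that the spliced object $W$ is a genuine path rather than merely a walk, which is handled by the absence of $2$-cycles in a bipartite tournament.
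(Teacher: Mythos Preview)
Your proof is correct and follows essentially the same approach as the paper: reduce a PC path between two vertices of $Q$ to a PC $(r,Q)$-path by swapping the initial vertex $q_{1}$ for $r$, using $c(rx_{2})=c(q_{1}x_{2})$ from Claim~\ref{claim: two vertices in Y2}. The paper's version is terser---it simply asserts the shape $(q',x_{2},y,x_{1},q'')$ ``by symmetry'' and handles $y\neq r$ by observing that $y=r$ would already give the shorter PC $(r,Q)$-path $(r,x_{1},q'')$---whereas you carry out the length analysis explicitly and rule out $w=r$ via the absence of $2$-cycles; both routes are valid.
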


\begin{proof}
By symmetry, assume that $(q',x_{2},y,x_{1},q'')$ is a PC path for some two vertices $q',q''\in Q$.
Note that $y\neq r$,
otherwise, there exists a PC $(r,Q)$-path $(y,x_{1},q'')$, contradicting Claim \ref{claim: no pc path from r to Q}.
Since $c(rx_{2})=c(q'x_{2})$,
we get that $(r,x_{2},y,x_{1},q'')$ a PC $(r,q'')$-path, a contradiction.
\end{proof}

Let $Q'\subseteq Q$ be the set of vertices which cannot reach $r$ by a PC path.
By Claims \ref{claim: no pc path from r to Q} and \ref{claim: no PC path connecting 2 vertices of Q},
no two vertices of $Q'\cup \{r\}$ are connected by a PC path.
It follows that $Y_{0}\cup Q'\cup \{r\}$ is a PCP-kernel, a contradiction.

\begin{case}
$Y_{0}=\emptyset$.
\end{case}

Recall that $D$ has no source in $Y$.
By the assumption we have that
every vertex in $Y$ has one outneighbor and one inneighbor in $\{x_{1},x_{2}\}$.
Let
$$
Y'=\{y\in Y:~x_{1}\rightarrow y,~y\rightarrow x_{2}\},~~Y''=Y\backslash Y'=\{y\in Y:~x_{2}\rightarrow y,~y\rightarrow x_{1}\},
$$
$$
Y^{*}=\{y\in Y':~c(x_{1}y)\neq c(yx_{2})\},~~Y^{**}=\{y\in Y'':~c(x_{2}y)\neq c(yx_{1})\}.
$$
In the following proof we need to keep in mind that
each vertex in $Y'$ can reach $x_{2}$ by a PC path
and each vertex in $Y''$ can reach $x_{1}$ by a PC path.
If $Y^{*}\cup Y^{**}=\emptyset$,
i.e., there exists no PC path connecting $x_{1}$ and $x_{2}$,
then clearly $\{x_{1},x_{2}\}$ is a PCP-kernel.
Now let $Y^{*}\cup Y^{**}\neq \emptyset$
and assume w.o.l.g. that $Y^{*}\neq \emptyset$.
If there exist $y'_{1},y'_{2}\in Y^{*}$ with $c(x_{1}y'_{1})\neq c(x_{1}y'_{2})$,
then since each vertex in $Y''$ can reach $x_{2}$ by a PC path passing through either $\{x_{1},y'_{1}\}$ or $\{x_{1},y'_{2}\}$
we get that $\{x_{2}\}$ is a PCP-kernel.
So we can assume that $c(x_{1}y')=\alpha$ for each $y'\in Y^{*}$.
Let
$$
Y'_{\alpha}=\{y\in Y':~c(x_{1}y)=c(yx_{2})=\alpha\},~~Y''_{\alpha}=\{y\in Y'':~c(x_{2}y)=c(yx_{1})=\alpha\}.
$$
We now claim that $Y^{**}\neq \emptyset$.
Assume the opposite that $Y^{**}=\emptyset$.
If $Y''_{\alpha}=\emptyset$,
then since each vertex in $Y''$ can reach $x_{2}$ by a PC path passing through $x_{1}$ and an arbitrary vertex in $Y^{*}$
we get that $\{x_{2}\}$ is a PCP-kernel.
If $Y''_{\alpha}\neq \emptyset$ and $Y'_{\alpha}=\emptyset$,
then since each vertex in $Y'$ can reach $Y''_{\alpha}$ by a PC path passing through $x_{2}$,
and each vertex in $Y''\backslash Y''_{\alpha}$ can reach $Y''_{\alpha}$ by a PC path passing through $x_{1}$ together with an arbitrary vertex in $Y^{*}$ and $x_{2}$,
we can get that $Y''_{\alpha}$ is a PCP-kernel.
If $Y''_{\alpha}\neq \emptyset$ and $Y'_{\alpha}\neq \emptyset$,
then by a similar analysis and the observation that no two vertices of $Y'_{\alpha}\cup Y''_{\alpha}$ are connected by a PC path
we have that $Y'_{\alpha}\cup Y''_{\alpha}$ is a PCP-kernel.
So $Y^{**}\neq \emptyset$.

If there exist $y''_{1},y''_{2}\in Y^{**}$ with $c(x_{2}y''_{1})\neq c(x_{2}y''_{2})$,
then similar to the analysis for $Y^{*}$ we have that $\{x_{1}\}$ is a PCP-kernel.
Thus, we can assume that $c(x_{2}y'')=\beta$ for each $y''\in Y^{**}$.
For the sake of a better presentation,
define the following vertex sets,
see also in Figure \ref{figure: an arc-colored BT with no sink and no source}
in which a vertex encircled may represent a set of vertices,
and solid arcs, dotted arcs, dashed arcs represent respectively the arcs colored by $\alpha$, $\beta$ and a color not in $\{\alpha,\beta\}$.
$$
Y'_{\beta}=\{y\in Y':~c(x_{1}y)=c(yx_{2})=\beta\},~~Y''_{\beta}=\{y\in Y'':~c(x_{2}y)=c(yx_{1})=\beta\},
$$
$$
Y'_{\gamma}=\{y\in Y':~c(x_{1}y)=c(yx_{2})\notin \{\alpha,\beta\}\},~~Y''_{\gamma}=\{y\in Y'':~c(x_{2}y)=c(yx_{1})\notin \{\alpha,\beta\}\},
$$
$$
Y'_{\alpha\beta}=\{y\in Y':~c(x_{1}y)=\alpha,~c(yx_{2})=\beta\},~~Y'_{\alpha\gamma}=\{y\in Y':~c(x_{1}y)=\alpha,~c(yx_{2})\notin \{\alpha,\beta\}\},
$$
$$
Y''_{\beta\alpha}=\{y\in Y'':~c(x_{2}y)=\beta,~c(yx_{1})=\alpha\},~~Y'_{\beta\gamma}=\{y\in Y'':~c(x_{2}y)=\beta,~c(yx_{1})\notin \{\alpha,\beta\}\}.
$$

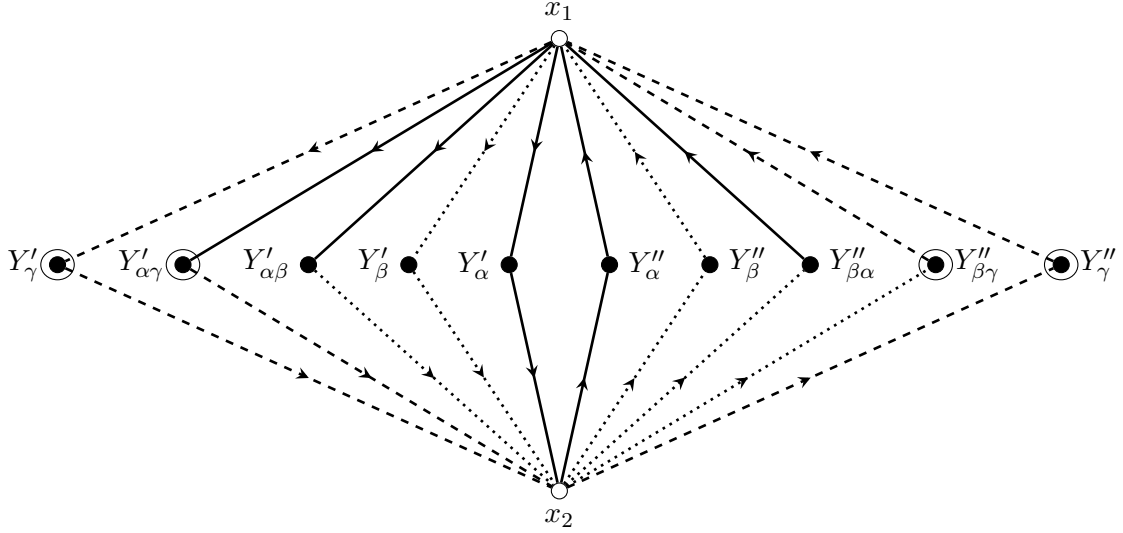
\begin{figure}[ht]
\label{figure: an arc-colored BT with no sink and no source}
\begin{center}
\begin{tikzpicture}[x=1.1cm, y=1.0cm, every edge/.style=
        {
        draw,line width=1pt,
        postaction={decorate,
                     decoration={markings,mark=at position 0.5 with {\arrow{stealth}}}
                     %other arrow styles:>, <, |, stealth, latex, stealth reversed
                   }
        }
]

%~~~~~~~~~~~~~draw grid~~~~~~~~~~~~~~~~~~~~~~~~~~~~~~~~~~~~~~~~~~~
%    \draw[step=1,gray,very thin] (-6, -6) grid (6, 6);
%~~~~~~~~~~~~~~~~~~~~~~~~~~~~~~~~~~~~~~~~~~~~~~~~~~~~~~~~~~~~~~~~~

    \draw(-6,0) circle (0.2);
    \draw(6,0) circle (0.2);
    \draw(-4.5,0) circle (0.2);
    \draw(4.5,0) circle (0.2);

    \vertex (x1) at (0,3) [label=above:{$x_{1}$}]{};
    \vertex (x2) at (0,-3) [label=below:{$x_{2}$}]{};

    \vertex (y1)[fill] at (-6,0) [label=left:{$Y'_{\gamma}$}]{};
    \vertex (y2)[fill] at (-4.5,0) [label=left:{$Y'_{\alpha\gamma}$}]{};
    \vertex (y3)[fill] at (-3.0,0) [label=left:{$Y'_{\alpha\beta}$}]{};
    \vertex (y4)[fill] at (-1.8,0) [label=left:{$Y'_{\beta}$}]{};
    \vertex (y5)[fill] at (-0.6,0) [label=left:{$Y'_{\alpha}$}]{};
    \vertex (y6)[fill] at (0.6,0) [label=right:{$Y''_{\alpha}$}]{};
    \vertex (y7)[fill] at (1.8,0) [label=right:{$Y''_{\beta}$}]{};
    \vertex (y8)[fill] at (3.0,0) [label=right:{$Y''_{\beta\alpha}$}]{};
    \vertex (y9)[fill] at (4.5,0) [label=right:{$Y''_{\beta\gamma}$}]{};
    \vertex (y10)[fill] at (6,0) [label=right:{$Y''_{\gamma}$}]{};

%---------------------------------------------------------------~~
    \path (x1) edge[dashed] (y1);
    \path (x1) edge[] (y2);
    \path (x1) edge[] (y3);
    \path (x1) edge[dotted] (y4);
    \path (x1) edge[] (y5);

    \path (y1) edge[dashed] (x2);
    \path (y2) edge[dashed] (x2);
    \path (y3) edge[dotted] (x2);
    \path (y4) edge[dotted] (x2);
    \path (y5) edge[] (x2);

    \path (x2) edge[] (y6);
    \path (x2) edge[dotted] (y7);
    \path (x2) edge[dotted] (y8);
    \path (x2) edge[dotted] (y9);
    \path (x2) edge[dashed] (y10);

    \path (y6) edge[] (x1);
    \path (y7) edge[dotted] (x1);
    \path (y8) edge[] (x1);
    \path (y9) edge[dashed] (x1);
    \path (y10) edge[dashed] (x1);

%~~~~~~~~~~~~~~~~~~~~~~~~~~~~~~~~~~~~~~~~~~~~~~~~~~~~~~~~~~~~~~~~~~
\end{tikzpicture}
\end{center}
\caption{An arc-colored bipartite tournament with no sink and no source.}
\end{figure}

Since $D$ contains no two contractible vertices,
we have $|Y'_{\alpha}|,|Y'_{\beta}|,|Y'_{\alpha\beta}|,|Y''_{\alpha}|,|Y''_{\beta}|,|Y'_{\beta\alpha}|\leq 1$.
Note that no two vertices of $Y''_{\alpha}\cup Y''_{\beta\alpha}$ are connected by a PC path in $D$ and also the following holds.
$$
Y^{*}=Y'_{\alpha\beta}\cup Y'_{\alpha\gamma}\neq \emptyset,~Y^{**}=Y''_{\beta\alpha}\cup Y'_{\beta\gamma}\neq \emptyset,
$$
$$
Y'=Y'_{\alpha}\cup Y'_{\beta}\cup Y'_{\gamma}\cup Y'_{\alpha\beta}\cup Y'_{\alpha\gamma},~~Y''=Y''_{\alpha}\cup Y''_{\beta}\cup Y''_{\gamma}\cup Y''_{\beta\alpha}\cup Y'_{\beta\gamma}.
$$
We distinguish two subcases.

\begin{subcase}
$\alpha=\beta$.
\end{subcase}

It follows that $Y'_{\alpha\beta}=Y''_{\beta\alpha}=\emptyset$
and $Y'_{\alpha\gamma}=Y^{*}\neq \emptyset$.
If $Y''_{\alpha}=\emptyset$,
then each vertex in $Y''$ can reach $x_{2}$ by a PC path passing through $x_{1}$ and an arbitrary vertex in $Y'_{\alpha\gamma}$,
which implies that $\{x_{2}\}$ is a PCP-kernel.
If $Y''_{\alpha}\neq \emptyset$,
then since each vertex in $Y'\backslash Y'_{\alpha}$ can reach $Y''_{\alpha}$ by a PC path passing through $x_{2}$,
and each vertex in $Y''\backslash Y''_{\alpha}$ can reach $Y''_{\alpha}$ by a PC path passing through $x_{1}$ together with an arbitrary vertex in $Y'_{\alpha\gamma}$ and $x_{2}$,
together with the observation that no two vertices in $Y'_{\alpha}\cup Y''_{\alpha}$ are connected by a PC path,
we can get that $Y'_{\alpha}\cup Y''_{\alpha}$ is a PCP-kernel.

\begin{subcase}
$\alpha\neq \beta$.
\end{subcase}

If $Y''_{\alpha}=Y''_{\beta\alpha}=\emptyset$,
then since $Y'_{\alpha\beta}\cup Y'_{\alpha\gamma}\neq \emptyset$
we get that each vertex in $Y''$ can reach $x_{2}$ by a PC path passing through $x_{1}$ and an arbitrary vertex in $Y'_{\alpha\beta}\cup Y'_{\alpha\gamma}$.
It follows that $\{x_{2}\}$ is a PCP-kernel.

If $Y''_{\alpha}\neq \emptyset$ and $Y''_{\beta\alpha}\neq \emptyset$,
then each vertex in $Y'$ can reach $Y''_{\alpha}\cup Y''_{\beta\alpha}$ by a PC path passing through $x_{2}$,
and each vertex in $Y''\backslash (Y''_{\alpha}\cup Y''_{\beta\alpha})$ can reach $Y''_{\alpha}$ by a PC path passing through $x_{1}$, an arbitrary vertex in $Y'_{\alpha\beta}\cup Y'_{\alpha\gamma}$ and $x_{2}$.
Recall that no two vertices of $Y''_{\alpha}\cup Y''_{\beta\alpha}$ are connected by a PC path in $D$.
So $Y''_{\alpha}\cup Y''_{\beta\alpha}$ is a PCP-kernel.

If $Y''_{\alpha}\neq \emptyset$ and $Y''_{\beta\alpha}=\emptyset$,
then we can show that either $Y''_{\alpha}$ or $Y'_{\alpha}\cup Y''_{\alpha}$ is a PCP-kernel.
If $Y'_{\alpha}=\emptyset$,
then each vertex in $Y'$ can reach $Y''_{\alpha}$ by a PC path passing through $x_{2}$,
and each vertex in $Y''\backslash Y''_{\alpha}$ can reach $Y''_{\alpha}$ by a PC path passing through $x_{1}$, an arbitrary vertex in $Y'_{\alpha\beta}\cup Y'_{\alpha\gamma}$ and $x_{2}$.
It follows that $Y''_{\alpha}$ is a PCP-kernel.
If $Y'_{\alpha}\neq \emptyset$,
noting that no two vertices of $Y'_{\alpha}\cup Y''_{\alpha}$ are connected by a PC path,
then we can similarly show that $Y'_{\alpha}\cup Y''_{\alpha}$ is a PCP-kernel.

Now assume that $Y''_{\alpha}=\emptyset$ and $Y''_{\beta\alpha}\neq \emptyset$.
If $Y'_{\beta}=Y'_{\alpha\beta}=\emptyset$,
then $Y'_{\alpha\gamma}=Y^{*}\neq \emptyset$,
each vertex in $Y''\backslash Y''_{\beta\alpha}$ can reach $Y''_{\beta\alpha}$ by a PC path
passing through $x_{1}$, an arbitrary vertex in $Y'_{\alpha\gamma}$ and $x_{2}$,
and clearly every vertex in $Y'$ can reach $Y''_{\beta\alpha}$ by a PC path passing through $x_{2}$.
It follows that $Y''_{\beta\alpha}$ is a PCP-kernel.
If $Y'_{\beta}=\emptyset$ and $Y'_{\alpha\beta}\neq \emptyset$,
then each vertex in $Y''\backslash Y''_{\alpha\beta}$ can reach $Y'_{\alpha\beta}$ by a PC path passing through $x_{1}$
and each vertex in $Y'\backslash Y'_{\alpha\beta}$ can reach $Y''_{\beta\alpha}$ by a PC path passing through $x_{2}$.
Recall that no two vertices of $Y'_{\alpha\beta}\cup Y''_{\beta\alpha}$ are connected by a PC path.
Then $Y'_{\alpha\beta}\cup Y''_{\beta\alpha}$ is a PCP-kernel.
If $Y'_{\beta}\neq \emptyset$ and $Y'_{\alpha\beta}\neq \emptyset$,
noting that no two vertices of $Y'_{\beta}\cup Y'_{\alpha\beta}$ are connected by a PC path,
then since each vertex in $Y''$ can reach $Y'_{\beta}\cup Y'_{\alpha\beta}$ by a PC path passing through $x_{1}$
and each vertex in $Y'\backslash (Y'_{\beta}\cup Y'_{\alpha\beta})$ can reach $Y'_{\beta}\cup Y'_{\alpha\beta}$ by a PC path passing through $x_{2}$, $Y''_{\beta\alpha}$ and $x_{1}$,
we can obtain that $Y'_{\beta}\cup Y'_{\alpha\beta}$ is a PCP-kernel.
Now let $Y'_{\beta}\neq \emptyset$ and $Y'_{\alpha\beta}=\emptyset$.
If $Y''_{\beta}=\emptyset$,
then since each vertex in $Y'\backslash Y'_{\beta}$ can reach $Y'_{\beta}$ by a PC path passing through $x_{2},Y''_{\beta\alpha},x_{1}$,
and each vertex in $Y''$ can reach $Y'_{\beta}$ by a PC path passing through $x_{1}$,
we can get that $Y'_{\beta}$ is a PCP-kernel.
If $Y''_{\beta}\neq \emptyset$,
then by observing that no two vertices of $Y'_{\beta}\cup Y''_{\beta}$ are connected by a PC path
we can similarly show that $Y'_{\beta}\cup Y''_{\beta}$ is a PCP-kernel.
\end{proof}

\section{An extension}

Recall that an arc-colored digraph is rainbow
if any two arcs receive two distinct colors.
Another interesting topic deserving further consideration is the existence of a {\em kernel by rainbow paths} in an arc-colored digraph $D$,
which is defined,
similar to the definition of MP-kernels or PCP-kernels,
as a set $S$ of vertices of $D$ such that
(i) no two vertices of $S$ are connected by a rainbow path in $D$,
and (ii) every vertex outside $S$ can reach $S$ by a rainbow path in $D$.
Similar to the proof of Proposition \ref{proposition: finding a pcp-kernel is np-hard},
we can get the computational complexity of finding a kernel by rainbow paths in an arc-colored digraph.

\begin{proposition}
It is NP-hard to recognize whether an arc-colored digraph has a kernel by rainbow paths or not.
\end{proposition}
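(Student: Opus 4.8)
The plan is to adapt the reduction behind Proposition~\ref{proposition: finding a pcp-kernel is np-hard} essentially verbatim, reducing from the problem of recognizing whether a digraph has a kernel, which is NP-complete by Theorem~\ref{thm: finding a kernel is npc}. Given a digraph $D$, I would form the arc-colored digraph $D'$ exactly as there: take a set $V^{*}$ of new vertices with $V^{*}\cap V(D)=\emptyset$, large enough that $|\{uv:u\in V^{*},v\in V(D)\}|\geq m$, put $V(D')=V(D)\cup V^{*}$ and $A(D')=A(D)\cup\{uv:u\in V^{*},v\in V(D)\}$, color every arc of $D$ with a single color, and color the arcs leaving $V^{*}$ so that all $m$ colors occur.

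The one thing to exploit is that monochromaticity collapses rainbow reachability, exactly as it collapsed proper-coloring reachability before. Since any two arcs of $D$ share a color, a rainbow path contained in $D$ has length at most $1$; and since every vertex of $V^{*}$ is a source of $D'$, a rainbow path that starts in $V(D)$ never visits $V^{*}$. Consequently, rainbow paths joining two vertices of $V(D)$ are precisely the arcs of $D$. With this in hand I would pin down the shape of any kernel by rainbow paths $S$ of $D'$: for each $w\in V(D)$, either $w\in S$ or $w$ reaches $S$ by a rainbow path that, starting in $V(D)$, avoids $V^{*}$ and hence ends in $V(D)\cap S$, so (in the only nontrivial case $V(D)\neq\emptyset$) $S$ meets $V(D)$; but then no vertex of $V^{*}$ can lie in $S$, since such a vertex dominates every vertex of $V(D)$, in particular a vertex of $V(D)\cap S$, contradicting condition~(i). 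Hence $\emptyset\neq S\subseteq V(D)$.

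The equivalence is then immediate. For such an $S$, condition~(i) says exactly that $S$ is independent in $D$; condition~(ii) holds automatically for the vertices of $V^{*}$ (each dominates all of $V(D)\supseteq S$ and $S\neq\emptyset$), and for a vertex of $V(D)\setminus S$ it says, because the only available rainbow paths have length at most $1$ and stay inside $D$, that this vertex dominates a vertex of $S$ in $D$. Thus $S$ is a kernel by rainbow paths of $D'$ precisely when $S$ is a (necessarily non-empty) kernel of $D$, so $D'$ has a kernel by rainbow paths if and only if $D$ has a kernel; since $D'$ is built in polynomial time, NP-hardness follows from Theorem~\ref{thm: finding a kernel is npc}.

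I do not expect any real obstacle: the entire content is the single observation that rainbow paths inside a monochromatic part reduce to single arcs, which is exactly the feature that made the PCP-kernel version work, and the only point needing care is the bookkeeping around the added sources $V^{*}$ — they are absorbed for free as soon as $S$ meets $V(D)$, which in turn forces $S$ to lie entirely in $V(D)$. (In fact $V^{*}$ is not even necessary: a $1$-colored digraph has a kernel by rainbow paths if and only if it has a kernel; keeping $V^{*}$ merely makes the proof parallel to that of Proposition~\ref{proposition: finding a pcp-kernel is np-hard}.)
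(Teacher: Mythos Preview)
Your proposal is correct and follows exactly the paper's own approach: the same auxiliary digraph $D'$ with added sources $V^{*}$, the same monochromatic coloring of $D$, and the same observation that rainbow paths inside a monochromatic subdigraph collapse to single arcs, yielding the equivalence with ordinary kernels of $D$. The paper's proof simply asserts this equivalence in one line, while you have spelled out the verification in detail; your parenthetical remark that $V^{*}$ is unnecessary is also correct.
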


\begin{proof}
Let $D$ and $D'$ be defined as in Proposition \ref{proposition: finding a pcp-kernel is np-hard}.
Color $D'$ by using $m$ colors in such a way that the subdigraph $D$ is monochromatic
and the arc set $\{uv:~u\in V^{*}, v\in V(D)\}$ is $m$-colored.
Then one can see that the $m$-colored $D'$ has a kernel by rainbow paths if and only if $D$ has a kernel.
By Theorem \ref{thm: finding a kernel is npc} the desired result holds.
\end{proof}

\end{spacing}

\begin{thebibliography}{0}

\bibitem{BG2008}
J. Bang-Jensen and G. Gutin,
Digraphs: Theory, Algorithms and Applications,
Springer Verlag, London, 2008.

\bibitem{Berge1977}
C. Berge,
Nouvelles extensions du noyau d¡¯un graphe et ses applications et theorie des jeux,
Publ. Econometriques 6, 1977.

\bibitem{Berge1984}
C. Berge,
Minimax theorems for normal hypergraphs and balanced hypergraphs-a survey,
Annals of Discrete Math. 21 (1984) 3-19.

\bibitem{BG2006}
E. Boros, V. Gurvich,
Perfect graphs, kernels, and cores of cooperative games,
Discrete Math. 306 (2006) 2336-2354.

\bibitem{BC2012}
S. Bowser, C. Cable,
At least three minimal quasi-kernels,
Discrete Applied Math. 160 (2012) 673-675.

\bibitem{Chvatal1973}
V. Chv\'{a}tal,
On the computational complexity of finding a kernel,
Report No. CRM-300, Centre de Recherches Math\'{e}matiques, Universit\'{e} de Montr\'{e}al, 1973.

\bibitem{CL1974}
V. Chv\'{a}tal, L. Lov\'{a}sz,
Every directed graph has a semi-kernel,
Lecture Notes in Math. 411 (1974) 175.

\bibitem{Duchet1980}
P. Duchet,
Graphes noyau-parfaits,
Combinatorics 79 (Proceedings of the Colloquium, University of Montr\'{e}al, Montreal, Que., 1979), Part II,
Annals of Discrete Math. 9 (1980) 93-101.

\bibitem{DM1983}
P. Duchet, H. Meyniel,
Une jeneralization du theoreme de Richardson sur l¡¯existence de noyaux dans les graphes orientes,
Discrete Math. 43 (1) (1983) 21-27.

\bibitem{Galeana1996}
H. Galeana-S\'{a}nchez,
On monochromatic paths and monochromatic cycles in edge coloured tournaments,
Discrete Math. 156 (1996) 103-112.

\bibitem{Galeana1998}
H. Galeana-S\'{a}nchez,
Kernels in edge-colored digraphs,
Discrete Math. 184 (1998) 87-99.

\bibitem{GN1984}
H. Galeana-S\'{a}nchez, V. Neumann-Lara,
On kernels and semikernels of digraphs,
Discrete Math. 48 (1) (1984) 67-76.

\bibitem{GPR1991}
H. Galeana-S\'{a}nchez, L. Pastrana-Ram\'{i}rez, H. A. Rinc\'{o}n-Mej\'{i}a,
Semikernels, quasi kernels, and grundy functions in the line digraphs,
SIAM Discrete Math. 4 (1) (1991) 80-83.

\bibitem{GR2004}
H. Galeana-S\'{a}nchez, R. Rojas-Monroy,
A counterexample to a conjecture on edge-coloured tournaments,
Discrete Math. 282 (2004) 275-276.

\bibitem{GR2004285}
H. Galeana-S\'{a}nchez, R. Rojas-Monroy,
On monochromatic paths and monochromatic 4-cycles in edge coloured bipartite tournaments,
Discrete Math. 285 (2004) 313-318.

\bibitem{HH2008}
S. Heard, J. Huang,
Disjoint quasi-kernels in digraphs,
J. Graph Theory 58 (2008) 251-260.

\bibitem{JM1996}
H. Jacob, H. Meyniel,
About quasi-kernels in a digraph,
Discrete Math. 154 (1996) 279-280.

\bibitem{Richardson1953}
M. Richardson,
Solutions of irreflexible relations,
Annals of Math. 58 (1953) 573-590.

\bibitem{SSW1982}
B. Sands, N. Sauer, R. Woodrow,
On monochromatic paths in edge-coloured digraphs,
J. Combin. Theory Ser. B 33 (1982) 271-275.

\bibitem{Shen1988}
M. Shen,
On monochromatic paths in $m$-colored tournaments,
J. Combin. Theory Ser. B 45 (1988) 108-111.

\bibitem{VM1944}
J. von Neumann, O. Morgenstern,
Theory of games and economic behavior,
Princeton University Press, Princeton, 1944.


\end{thebibliography}
\end{document}